\newcommand{\df}[1]{\ensuremath{\mathrm{df}({#1})}\xspace}
\newcommand{\core}{{\mathrm{core}}}
\newcommand{\bi}{\operatorname{bi}\xspace}
\newcommand{\odd}{\operatorname{odd}\xspace}
\newtheorem{theorem}{Theorem}[section]
\newtheorem{lemma}[theorem]{Lemma}
\newtheorem{proposition}[theorem]{Proposition}
\newtheorem{remark}[theorem]{Remark}
\newtheorem{example}[theorem]{Example}
\title
[Berge's conjecture for cubic graphs\ldots]{Berge's conjecture for cubic graphs\\
with small colouring defect}
\author[J. Karab\'a\v{s}]{J\'an Karab\'a\v{s}}
\author[E. M\'a\v{c}ajov\'a]{Edita M\'a\v cajov\'a}
\author[R. Nedela]{Roman Nedela}
\author[M. \v{S}koviera]{Martin \v Skoviera}
\email[J. Karab\'a\v{s}]{jan.karabas@stuba.sk}
\email[E. M\'a\v{c}ajov\'a]{macajova@dcs.fmph.uniba.sk}
\email[R. Nedela]{nedela@savbb.sk}
\email[M. \v{S}koviera]{skoviera@dcs.fmph.uniba.sk}
\address[J. Karab\'a\v{s}]{
Faculty of Electrical Engineering and Information Technology,
Slovak University of Technology, Bratislava, Slovakia
}
\address[E. M\'a\v{c}ajov\'a]{
Faculty of Mathematics, Physics and Informatics, Comenius University, Bratislava, Slovakia
}
\address[R. Nedela]{
Faculty of Applied Sciences, University of West Bohemia, Pilsen, Czech Republic}
\address[J. Karab\'a\v{s}, R. Nedela]{
Mathematical Institute of Slovak Academy of Sciences,
Bansk\'a Bystrica, Slovakia
}
\address[M. \v{S}koviera]{
Faculty of Mathematics, Physics and Informatics, Comenius University, Bratislava, Slovakia
}
\begin{document}

\begin{abstract}
A long-standing conjecture of Berge suggests that every
bridgeless cubic graph can be expressed as a union of at most
five perfect matchings. This conjecture trivially holds for
$3$-edge-colourable cubic graphs, but remains widely open for
graphs that are not $3$-edge-colourable. The aim of this paper
is to verify the validity of Berge's conjecture for cubic
graphs that are in a certain sense close to $3$-edge-colourable
graphs. We measure the closeness by looking at the colouring
defect, which is defined as the minimum number of edges left
uncovered by any collection of three perfect matchings. While
$3$-edge-colourable graphs have defect $0$, every bridgeless
cubic graph with no $3$-edge-colouring has defect at least $3$.
In 2015, Steffen proved that the Berge conjecture holds for
cyclically $4$-edge-connected cubic graphs with colouring
defect $3$ or $4$. Our aim is to improve Steffen's result in
two ways. We show that all bridgeless cubic graphs with
defect $3$ satisfy Berge's conjecture irrespectively of their
cyclic connectivity. If, additionally, the graph in
question is cyclically $4$-edge-connected, then four perfect
matchings suffice, unless the graph is the Petersen graph. The
result is best possible as there exists an infinite family of
cubic graphs with cyclic connectivity $3$ which have defect $3$
but cannot be covered with four perfect matchings.
\end{abstract}

\keywords{perfect matching, Berge's conjecture, snark}

\subjclass[2020]{05C15, 05C70 (Primary); 05C75 (Secondary)}

\dedicatory{Dedicated to professor J\'an Plesn\'\i{}k, our teacher and colleague.}

\maketitle

\section{Introduction}
\noindent{}In 1970's, Claude Berge made a conjecture that every
bridgeless cubic graph $G$ can have its edges covered by at
most five perfect matchings (see \cite{Mazz}). The
corresponding set of matchings is called a \textit{Berge cover}
of $G$.

Berge's conjecture relies on the fact that every preassigned
edge of $G$ belongs to a perfect matching, and hence there is a
set of perfect matchings that cover all the edges of $G$. The
smallest number of perfect matchings needed for this purpose is
the \textit{perfect matching index} of $G$, denoted by
$\pi(G)$. Clearly, $\pi(G)=3$ if and only if $G$ is
$3$-edge-colourable, so if $G$ has chromatic index~$4$, then
the value of $\pi(G)$ is believed to be either $4$ or $5$. In
this context it may be worth mentioning that $\pi(Pg)=5$, where
$Pg$ denotes the Petersen graph, and that cubic  graphs with
$\pi=5$ are very rare \cite{GGHM, MS-pmi}.

Berge's conjecture is closely related to a stronger and
arguably more famous conjecture of Fulkerson \cite{F},
attributed also to Berge and therefore often referred to as the
Berge-Fulkerson conjecture \cite{Seymour-multi}. The latter
conjecture suggests that every bridgeless cubic graph contains
a collection of six perfect matchings such that each edge
belongs to precisely two of them. Fulkerson's conjecture
clearly implies Berge's. Somewhat surprisingly, the converse
holds as well, which follows from an ingenious construction due
to Mazzuoccolo \cite{Mazz}.

Very little is known about the validity of either of these
conjectures. Besides the $3$-edge-colourable cubic graphs, the
conjectures are known to hold only for a few classes of graphs,
mostly possessing a very specific structure (see
\cite{Chen-Fan, FV-pmi, HaoCQZ-2009, HaoCQZ-2018,
LiuHaoCQZ-2021, ManSh,SW}). On the other hand, it has been proved
that if Fulkerson's conjecture is false, then the smallest
counterexample would be cyclically $5$-edge-connected (M\'a\v
cajov\'a and Mazzuoccolo~\cite{MM}).

In this paper we investigate Berge's conjecture under more
general assumptions, not relying on any specific structure of
graphs: our aim is to verify the conjecture for
all cubic graphs that are, in a certain sense, close to
$3$-edge-colourable graphs. In our case, the proximity to
$3$-edge-colourable graphs will be measured by the value of
their colouring defect. The \textit{colouring defect} of a
cubic graph, or just \textit{defect} for short, is the smallest
number of edges that are left uncovered by any set of three
perfect matchings. This concept was introduced and thoroughly
studied by Steffen \cite{S2} in 2015 who used the notation
$\mu_3(G)$ but did not coin any term for it. Since a cubic
graph has defect $0$ if and only if it is $3$-edge-colourable,
colouring defect can serve as a measure of
uncolourability of cubic graphs, along with resistance,
oddness, and other similar invariants recently studied by a
number of authors, see for example \cite{Allie, AlMaS,
FMS-survey}.

In \cite[Corollary~2.5]{S2} Steffen proved that the defect of
every bridgeless cubic graph that cannot be $3$-edge-coloured
is at least $3$, and can be arbitrarily large. He also proved
that every cyclically $4$-edge-connected cubic graph with
defect $3$ or $4$ satisfies Berge's conjecture
\cite[Theorem~2.14]{S2}.

We strengthen the latter result in two ways.

\medskip

First, we show that every bridgeless
cubic graph with defect $3$ satisfies the Berge
conjecture irrespectively of its cyclic connectivity.

\begin{theorem}\label{thm:1}
Every bridgeless cubic graph with colouring defect $3$
admits a Berge cover.
\end{theorem}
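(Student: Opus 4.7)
The plan is to combine Steffen's theorem, which handles the cyclically $4$-edge-connected case, with an inductive decomposition along small cyclic edge cuts. Fix a bridgeless cubic graph $G$ with $\df{G}=3$, witnessed by a triple $\mathcal{M}=(M_1,M_2,M_3)$ of perfect matchings whose uncovered edges form a three-element set $U=\{e_1,e_2,e_3\}$. A straightforward double count of the matching incidences at each vertex gives
$\sum_v (3-f_\mathcal{M}(v)) = 6$,
where $f_\mathcal{M}(v)$ denotes the number of distinct edges of $M_1\cup M_2\cup M_3$ incident with $v$. This immediately pins down the possible shapes of the subgraph induced by $U$ to exactly four types: a matching of three edges, two incident edges together with a disjoint edge, a path of length three, or a triangle (the ``star'' configuration being ruled out since every vertex must be met by each $M_i$). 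In each case I would also locate the neighbouring \emph{double-covered} and \emph{triple-covered} edges, i.e.\ those lying in exactly two or in all three matchings of $\mathcal{M}$.

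The argument then proceeds by induction on $|V(G)|$. If $G$ is cyclically $4$-edge-connected, Steffen's Theorem~2.14 of~\cite{S2} furnishes a Berge cover directly. Otherwise $G$ has a cyclic edge cut $T$ of size two or three, along which I would cleave $G$ into two smaller bridgeless cubic graphs $H_1,H_2$ by the standard reductions --- closing a $2$-cut with an extra edge, or a $3$-cut with a new trivalent vertex. Using the structural description of $U$ above, I would show that by permuting $M_1,M_2,M_3$ on one side, or by rerouting matching edges near $T$ when $U$ straddles the cut, both $H_1$ and $H_2$ can be arranged to have colouring defect at most three. The induction hypothesis then produces Berge covers $\mathcal{B}_1$ and $\mathcal{B}_2$.

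Merging $\mathcal{B}_1$ and $\mathcal{B}_2$ into a Berge cover of $G$ amounts to pairing matchings on the two sides whose intersections with $T$ coincide --- their \emph{cut signatures}. For $|T|=2$ the pairing is forced by parity, and combining the two five-matching covers is routine. For $|T|=3$ each perfect matching meets $T$ in a subset of odd cardinality, leaving at most four admissible signatures; a pigeonhole / swap argument should then allow the five matchings on each side to be paired up compatibly, possibly after replacing a single member of $\mathcal{B}_i$ by another perfect matching of $H_i$ with the required signature.

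The hard part is guaranteeing matchable cut signatures on both sides of a $3$-cut, especially when $U$ induces a triangle. In the triangle case, no single perfect matching of $G$ can contain two edges of $U$, so the two additional matchings needed to cover $U$ are tightly constrained, and the pairing procedure loses its slack. I expect this to force either a strengthening of the induction hypothesis (delivering Berge covers with prescribed cut signatures) or a separate argument in which the triangle is contracted and the resulting smaller cubic graph is treated directly, exploiting the three triple-covered edges emanating from it to rebuild the cover of $G$.
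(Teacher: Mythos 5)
Your proposal does not follow the paper's route, and it has a genuine gap at its core. The paper's proof is direct, not inductive: by Theorem~\ref{thm:main}, an optimal $3$-array of a defect-$3$ snark has a hexagonal core, i.e.\ the three uncovered edges form an induced matching alternating with three doubly covered edges around an induced $6$-cycle $C$. Lemma~\ref{lem:M4} then produces a single perfect matching $M_4$ through \emph{two} of the three uncovered edges (by showing, via a Kempe-chain argument, that some uncovered edge $e_i$ is not ``bad'', and then applying the $6$-edge-cut structure Theorem~\ref{thm:6cuts} to $G-V(P)$ for a suitable path $P\subseteq C$), and Plesn\'{\i}k's theorem supplies $M_5$ through the last uncovered edge. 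Your preliminary classification of the uncovered set $U$ is also off: your own identity $\sum_v(3-f_{\mathcal M}(v))=6$ forces $d_2+2d_3=3$ for the numbers of doubly and triply covered edges, and a triangle (resp.\ a path of length three, resp.\ two incident edges plus one) of uncovered edges would force $d_3\ge 3$ (resp.\ $d_3\ge 2$, resp.\ $d_3\ge1$ and $d_2\ge2$), all contradicting this; only the induced-matching configuration survives, and optimality then yields the hexagon. So the ``hard triangle case'' you plan to treat separately is vacuous.

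The fatal issue is the inductive merging step. You assert that after cleaving along a cyclic $2$- or $3$-edge-cut $T$ both pieces can be arranged to have defect at most $3$ (unproved: the hexagonal core may cross $T$, and the uncovered edges may be split between the sides), and, more seriously, that ``a pigeonhole / swap argument should then allow'' the five matchings on each side of a $3$-cut to be paired by cut signature. No such merging lemma is available, and this is not a technicality: a perfect matching meets a $3$-cut in one edge or in all three, the two Berge covers need not realise compatible multisets of signatures, and replacing a member of $\mathcal B_i$ by ``another perfect matching with the required signature'' presupposes existence you cannot guarantee. Indeed, Section~7 of the paper glues a $3$-edge-colourable piece to $Pg-v$ across a $3$-cut and obtains defect-$3$ graphs with $\pi=5$, showing that covers of the two sides do not merge into a four-matching cover even when each side individually is very well behaved; and a merging lemma of the strength you need would reduce Berge's conjecture itself to the cyclically $4$-edge-connected case, which is open. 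As written, the proposal reduces the theorem to a statement that is at least as hard as the theorem.
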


Second, we prove that if a graph with colouring defect $3$ is
cyclically $4$-edge-connected, then four perfect matchings are
enough to cover all its edges, except for the Petersen graph.

\begin{theorem}\label{thm:2}
Let $G$ be a cyclically $4$-edge-connected cubic graph with
defect~$3$. Then $\pi(G)=4$, unless $G$ is the Petersen graph.
\end{theorem}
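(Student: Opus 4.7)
The plan is to start from a triple $(M_1, M_2, M_3)$ of perfect matchings realising defect~$3$ and to extend the three uncovered edges $U$ to a fourth perfect matching $M_4$. First I would work out the local structure of the three matchings. At each vertex the multiplicities of $M_1, M_2, M_3$ on the three incident edges sum to $3$, so the vertex type is one of $(1,1,1)$, $(2,1,0)$, or $(3,0,0)$. A short propagation argument excludes $(3,0,0)$: an edge lying in all three matchings would force both of its endpoints to contribute two uncovered edges, producing four uncovered edges in a simple graph. Consequently $U$ is a $3$-edge matching, the doubly-covered edge set $F$ is also a $3$-edge matching, and $V(U) = V(F)$. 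Since the edges of $U \cup F$ alternate between the two matchings and $G$ has no parallel edges, $U \cup F$ is a single $6$-cycle $C$.

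The problem thereby reduces to finding a perfect matching $N$ of $G' := G - V(C)$, because then $M_4 := U \cup N$ completes a $4$-cover $\{M_1, M_2, M_3, M_4\}$. I would apply Tutte's theorem to $G'$ and rule out every obstruction using cyclic $4$-edge-connectivity. Let $\ell$ be the number of singly-covered edges inside $V(C)$; a degree count gives $|\partial V(C)| = 6 - 2\ell$, and comparing edges to vertices in $V \setminus V(C)$ forces $\ell \in \{0, 1\}$ whenever $|V(G)| \ge 10$, which is automatic for defect~$3$. Suppose $G'$ has no perfect matching; Tutte then provides a set $S \subseteq V(G')$ for which $T := V(C) \cup S$ satisfies $o(G - T) \ge |T| - 4$. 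I would bound the cut $[T, V \setminus T]$ from below by summing $3$ for each singleton odd component and $5$ for every larger one (using cyclic $4$-edge-connectivity together with the odd parity of cuts in cubic graphs), and from above by $3|T| - 2 e(T)$ with $e(T) \ge 6 + \ell$ coming from $U \cup F$. If $\ell = 1$ the two bounds are incompatible for every admissible value of $o(G - T)$, so $G'$ has a perfect matching. If $\ell = 0$ the bounds can be simultaneously realised only with full equality, which pins down a rigid structure in which every odd component of $G - T$ is a singleton, $S$ is an independent set, and $V(C)$ sends no edge to $S$.

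This rigid configuration forces $|V(G)| = 2|S| + 8$ and a highly restricted adjacency pattern between $V(C)$, the independent set $S$, and the $|S| + 2$ outside singletons. A case analysis on how the six outgoing edges of $V(C)$ distribute among the outside vertices, combined with cyclic $4$-edge-connectivity, leaves only the configuration in which antipodal pairs on $C$ share an outside neighbour; this configuration yields a triangle- and $4$-cycle-free cubic graph on ten vertices, which is the Petersen graph by uniqueness. The main obstacle is that a given starting triple may produce a bad $U$ even when $G$ is not the Petersen graph, so the argument must exploit the freedom to modify the triple. I would achieve this by Kempe-chain swaps along the bichromatic paths and cycles of $M_i \cup M_j$ that meet $C$, each swap preserving defect~$3$ while moving $U$ to a different $3$-edge matching. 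The closing step is to show that if every triple in the Kempe orbit of $(M_1, M_2, M_3)$ yields the rigid configuration above, then $G$ must itself be the Petersen graph; this relies on the exceptional rigidity of $Pg$, in which every edge lies in exactly two of the six perfect matchings, so that all defect-$3$ configurations are equivalent and none extends.
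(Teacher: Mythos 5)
Your opening is sound and matches the paper's setup: for defect~$3$ the core of an optimal $3$-array is an induced $6$-cycle $C$, and the whole problem reduces to finding a perfect matching of $H=G-V(C)$, since such a matching extends to an $M_4$ covering the three uncovered edges. Your Tutte analysis of $H$ (the paper packages it as Theorem~\ref{thm:6cuts}) correctly yields the rigid structure: an independent set $S$, all components of $H-S$ reduced to singletons by cyclic $4$-edge-connectivity, $\odd(H-S)=|S|+2$, and no edges from $S$ to $C$. Up to this point you are reproducing Claim~1 of the paper's proof.

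The gap is in the jump from this rigid bipartite structure to the Petersen graph. You assert that a case analysis of how the six edges of $\delta(C)$ distribute among the $|S|+2$ outside vertices, ``combined with cyclic $4$-edge-connectivity,'' forces antipodal vertices of $C$ to share a neighbour and hence $|V(G)|=10$. This does not follow: nothing in the Tutte obstruction or in the connectivity hypothesis bounds $|S|$, and for large $|S|$ the six attachment vertices $u_0,\dots,u_5$ can be pairwise distinct while all your counting identities hold with equality. To rule these configurations out one must use the hypothesis $\pi(G)>4$ a second time, in a colouring-theoretic way: every $3$-edge-colouring of $G-E(C)$ must use all three colours on $\delta(C)$ (otherwise a colour class extends to a fourth perfect matching through the uncovered edges), and since $G$ is a snark the induced colour vector cannot be one that extends over $C$. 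The paper then narrows the admissible colour types of $G-E(C)$ to eight, via Parity Lemma and Kempe-chain switches, and shows that if some antipodal pair $u_i,u_{i+3}$ were distinct, an auxiliary cubic graph $H^{\sharp}$ (obtained by adding two edges to a bipartite graph) would be a snark --- contradicting the fact that almost bipartite cubic graphs are $3$-edge-colourable, unless $G$ has a cycle-separating $3$-cut. None of this is present in your proposal, and your closing appeal to Kempe orbits of the matching triple is circular: ``if every triple in the orbit yields the rigid configuration then $G$ is the Petersen graph'' is precisely the statement that still needs a proof, and the paper in fact never needs to vary the triple at all --- a single optimal $3$-array suffices once the colour-type analysis is in place.
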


As we shall see later, Theorem~\ref{thm:2} is best
possible in the sense that there exist infinitely many
$3$-connected cubic graphs with colouring defect $3$ that
cannot be covered with four perfect matchings.

The existence of a cover with four perfect matchings is known
to have a number of important consequences. For example, such a
graph satisfies the Fan-Raspaud conjecture \cite{FR}, admits a
$5$-cycle double cover and has a cycle cover of length
$4/3\cdot m$, where $m$ is the number of edges
\cite[Theorem~3.1]{S2}.

\medskip

Our proofs use a wide range of methods. Both main results rely
on Theorem~\ref{thm:6cuts} which describes the structure of a
subgraph resulting from the removal of a $6$-edge-cut from a
bridgeless cubic graph. Moreover, the proof of
Theorem~\ref{thm:2} establishes an interesting relationship
between the colouring defect and the bipartite index of a cubic
graph. We recall that the bipartite index of a graph is the
smallest number of edges whose removal yields a bipartite
graph. This concept was introduced by Thomassen in \cite{Th1}
and was applied in \cite{Th1, Th2} in a different context.

\medskip

Our paper is organised as follows. The next section summarises
definitions and results needed for understanding the rest of
the paper. Section~\ref{sec:arrays} provides a brief account of
the theory surrounding the notion of colouring defect. Specific
tools needed for the proofs of Theorems~\ref{thm:1}
and~\ref{thm:2} are established in Section~\ref{sec:tools}. The
two theorems are proved in Sections~\ref{sec:thm1}
and~\ref{sec:thm2}, respectively. The final section illustrates
that the condition on cyclic connectivity in
Theorem~\ref{thm:2} cannot be removed.

\section{Preliminaries}
\noindent{}\textbf{2.1. Graphs.} Graphs studied in this paper
are finite and mostly cubic (that is, 3-valent). Multiple edges
and loops are permitted. The \emph{order} of a graph $G$,
denoted by $|G|$, is the number of its vertices. A
\emph{circuit} in $G$ is a connected $2$-regular subgraph of
$G$. A $k$-\emph{cycle} is a circuit of length~$k$. The
\emph{girth} of $G$ is the length of a shortest circuit in $G$.

An \emph{edge cut} is a set $R$ of edges of a graph whose
deletion yields a disconnected graph. A common type of an edge
cut arises by taking a subset of vertices or an induced
subgraph $H$ of $G$ and letting $R$ to be the set $\delta_G(H)$
of all edges with exactly one end in $H$. We omit the subscript
$G$ whenever $G$ is clear from the context.

A connected graph $G$ is said to be \emph{cyclically
$k$-edge-connected} for some integer $k\ge 1$ if the removal of
fewer than $k$ edges cannot leave a subgraph with at least two
components containing circuits. The \emph{cyclic connectivity}
of $G$ is the largest integer $k$ not exceeding the cycle rank
of $G$ such that $G$ is cyclically $k$-edge-connected
(recall that cycle rank is also known as Betti number or
cyclomatic number, see \cite[Chapter~1.9]{D}). An edge cut $R$
in $G$ that separates two circuits from each other is
\emph{cycle-separating}. It is not difficult to see that the
set $\delta_G(C)$ leaving a shortest circuit $C$ of a cubic
graph $G$ is cycle-separating unless $G$ is the complete
bipartite graph $K_{3,3}$, the complete graph $K_4$, or the
\emph{$3$-dipole}, the graph which consists of two vertices and
three parallel edges joining them. Observe that an edge cut
formed by a set of independent edges is always
cycle-separating. Conversely, a cycle-separating edge cut of
minimum size is independent.

\medskip

\noindent{}\textbf{2.2. Edge colourings and flows.} An
\emph{edge colouring} of a graph $G$ is a mapping from the edge
set of $G$ to a set of colours. A colouring is \emph{proper} if
any two edge-ends incident with the same vertex receive
distinct colours. A \emph{$k$-edge-colouring} is a proper edge
colouring where the set of colours has $k$ elements. Unless
specified otherwise, our colouring will be assumed to be proper
and graphs to be \emph{subcubic}, that is, with vertices of
valency $1$, $2$, or~$3$.

There is a  standard method of transforming a
$3$-edge-colouring to another $3$-edge-colouring: it uses
so-called Kempe switches: Let $G$ be a subcubic graph endowed
with a proper $3$-edge-colouring $\sigma$. Take two distinct
colours $i$ and $j$ from $\{1,2,3\}$. An \emph{$(i,j)$-Kempe
chain} in $G$ (with respect to $\sigma$) is a non-extendable
walk $L$ that alternates edges coloured $i$ with those coloured
$j$. It is easy to see that $L$ is either a bicoloured circuit or
path starting and ending at the vertex of valency smaller than
$3$. The \emph{Kempe switch} along a Kempe chain produces a new
$3$-edge-colouring of $G$ by interchanging the colours~on~$L$.

It is often useful to regard $3$-edge-colourings of cubic
graphs as nowhere-zero flows. To be more precise, one can
identify each colour from the set $\{1,2,3\}$ with its binary
representation; thus $1=(0,1)$, $2=(1,0)$, and $3=(1,1)$.
Having done this, the condition that the three colours meeting
at every vertex $v$ are all distinct becomes equivalent to
requiring the sum of the colours at $v$ to be $0=(0,0)$ in
$\mathbb{Z}_2\times\mathbb{Z}_2$. The latter is nothing but
the Kirchhoff law for nowhere-zero
$\mathbb{Z}_2\times\mathbb{Z}_2$-flows. Recall that an
\textit{$A$-flow} on a graph $G$ is a pair $(D,\phi)$ where
$\phi$ is an assignment of elements of an abelian group $A$ to
the edges of $G$, and $D$ is an assignment of one of two
directions to each edge in such a way that, for every vertex
$v$ in $G$, the sum of values flowing into $v$ equals the sum
of values flowing out of~$v$ (\emph{Kirchhoff's law}). A
\emph{nowhere-zero} $A$-flow is one which does not assign $0\in
A$ to any edge of $G$. If each element $x\in A$ satisfies
$x=-x$, then $D$ can be omitted from the definition. It is well
known that the latter is satisfied if and only if $A\cong
\mathbb{Z}_2^n$ for some $n\ge 1$.

The following well-known statement is a direct consequence of
Kirchhoff's law.

\begin{lemma}{\rm (Parity Lemma)}\label{lem:par}
Let $G$ be a graph with maximum degree $3$ endowed with a
proper $3$-edge-colouring~$\xi$. If $H$ is a subgraph of $G$
such that every vertex of $H$ is trivalent in $G$, then
$$\sum_{e\in\delta_G(H)}\xi(e)=0.$$
Equivalently, the number of edges in $\delta_G(H)$ carrying any
fixed colour has the same parity as the size of the cut.
\end{lemma}

A cubic graph $G$ is said to be \emph{colourable} if it admits
a $3$-edge-colouring. A $2$-connected cubic graph that admits
no $3$-edge-colouring is called a \emph{snark}. Our definition
agrees with that of Cameron et al. \cite{CCW}, Nedela and \v
Skoviera \cite{NS-decred}, Steffen \cite{S1}, and others, and
leaves the concept of a snark as wide as possible. A more
restrictive definition requires a snark to be to be cyclically
$4$-edge-connected, with girth at least~$5$, see for
example~\cite{FMS-survey}. We call such snarks
\emph{nontrivial}.

\medskip

\noindent{}\textbf{2.3. Perfect matchings.} The classical
theorems of Tutte \cite{Tutte} and Plesn\'ik \cite{Plesnik},
stated below,
will be repeatedly used throughout the paper, albeit in a
slightly modified form permitting parallel edges and loops.
These extensions can be proved easily by using the standard
versions of the corresponding theorems.

Let $\odd(G)$ denote the number of \emph{odd components} of
$G$, that is, the components with an odd number of vertices.

\begin{theorem} \label{thm:Tutte}
{\rm (Tutte, 1947)} A graph $G$, possibly containing parallel
edges and loops, has a perfect matching if and only if
$$\odd(G-S)\le |S| \quad \text{for all }S\subseteq V(G).$$
\end{theorem}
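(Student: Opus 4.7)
The plan is to derive this extended version from the classical Tutte theorem for simple graphs by a straightforward reduction, as the excerpt itself suggests.

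First I record two elementary observations: (i) a loop cannot lie in any matching, because a perfect matching requires each vertex to have degree exactly $1$ in the matching whereas a loop at $v$ would contribute $2$ to the degree of $v$; hence deleting all loops from $G$ does not affect the existence of a perfect matching. (ii) At most one edge from any class of parallel edges can lie in a matching, and conversely every matching in the graph obtained by collapsing each parallel class into a single edge lifts back to a matching of $G$ by choosing any representative in each class. Consequently, if $G'$ denotes the underlying simple graph of $G$ (loops removed, parallel classes identified), then $G$ admits a perfect matching if and only if $G'$ does.

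For the necessity direction I argue directly, avoiding any appeal to $G'$. Suppose $G$ has a perfect matching $M$ and fix $S\subseteq V(G)$. For any odd component $C$ of $G-S$, the edges of $M$ with both ends in $C$ cover an even number of vertices of $C$, so at least one vertex of $C$ must be matched by $M$ to a vertex outside $C$, which, since $C$ is a component of $G-S$, necessarily lies in $S$. Distinct odd components of $G-S$ are matched to distinct vertices of $S$, yielding $\odd(G-S)\le |S|$.

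For the sufficiency direction the key observation is that the Tutte conditions for $G$ and $G'$ are identical. Removing loops and identifying parallel edges leaves the vertex set of $G$ and the component structure of $G-S$ unchanged for every $S\subseteq V(G)$, so $\odd(G-S)=\odd(G'-S)$. Assuming the Tutte condition holds in $G$, it therefore holds in $G'$, and the classical form of Tutte's theorem, applied to the simple graph $G'$, produces a perfect matching $M'$ of $G'$. Lifting $M'$ through the identification of parallel classes yields a perfect matching of $G$, completing the proof. No real obstacle arises; the only point requiring verification is the invariance of $\odd(G-S)$ under the reduction $G\to G'$, which is immediate since neither loops nor parallel edges affect connectivity.
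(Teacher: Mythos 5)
Your proof is correct and follows exactly the route the paper intends: the paper gives no explicit proof, merely remarking that the multigraph version ``can be proved easily by using the standard versions,'' and your reduction to the underlying simple graph (loops and parallel edges affect neither matchings nor the component structure of $G-S$, so the Tutte condition transfers) together with the direct counting argument for necessity is precisely that easy derivation.
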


\begin{theorem} \label{thm:Plesnik}
{\rm (Plesn\'\i k, 1972)} Let $G$ be an $(r-1)$-edge-connected
$r$-regular graph with $r\ge 1$, and let $A$ be an arbitrary
set of $r-1$ edges in $G$. If $G$ has even order, then $G - A$
has a perfect matching.
\end{theorem}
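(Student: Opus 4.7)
My plan is to mimic the classical proof of Plesn\'ik's theorem for simple graphs via a contradiction using Tutte's theorem, with small adjustments to cover loops and parallel edges.

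Suppose, for contradiction, that $G - A$ has no perfect matching. By Theorem~\ref{thm:Tutte} there exists $S \subseteq V(G)$ with $\odd(G - A - S) > |S|$, and since $|V(G)|$ is even, the standard parity comparison (each even component contributes an even count, whence $|V(G)| \equiv |S| + \odd(G - A - S) \pmod 2$) forces $\odd(G - A - S) \ge |S| + 2$. In particular there are at least two odd components of $G - A - S$, so for each such component $C$ we have $V(C) \neq V(G)$, and $(r-1)$-edge-connectivity of $G$ yields $|\delta_G(C)| \ge r - 1$.

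Next I would sharpen this to $|\delta_G(C)| \ge r$ via a parity argument. Double-counting edge-ends incident with $V(C)$ gives $r|C| = 2|E(G[C])| + |\delta_G(C)|$, where a loop at a vertex of $C$ contributes $2$ to both sides and parallel edges are counted with multiplicity on both sides; hence $|\delta_G(C)| \equiv r|C| \equiv r \pmod 2$, using that $|C|$ is odd. Since $r - 1$ and $r$ have opposite parities, the bound $|\delta_G(C)| \ge r - 1$ combined with the parity condition upgrades to $|\delta_G(C)| \ge r$.

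Finally I would sum over all odd components and balance against the available edges. Each edge of $\delta_G(C)$ either has an end in $S$ or lies in $A$, because an edge outside $A$ whose ends are both outside $S$ would join $C$ to another component of $G - A - S$. The edges with an end in $S$ contribute at most $r|S|$ in total, and each edge of $A$ is counted at most twice in $\sum_C |\delta_G(C)|$. Therefore
\[
r(|S|+2) \le \sum_C |\delta_G(C)| \le r|S| + 2|A| = r|S| + 2(r-1),
\]
which gives $2r \le 2r - 2$, the desired contradiction. The only subtlety is to verify that loops and parallel edges do not disturb either the degree-sum identity or the accounting of edges in $A$; both checks are routine, which is presumably why the paper dismisses the multigraph extension as easy.
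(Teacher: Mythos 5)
Your proof is correct, but note that the paper does not actually prove this statement: it quotes Plesn\'{\i}k's theorem as a known result from the 1972 paper and merely remarks that the extension to multigraphs with parallel edges and loops ``can be proved easily by using the standard versions,'' i.e.\ the authors envisage a reduction to the simple-graph case rather than a proof from first principles. What you supply instead is the standard direct derivation from Tutte's theorem, carried out verbatim in the multigraph setting, and every step checks out: the parity upgrade from $\odd(G-A-S)>|S|$ to $\odd(G-A-S)\ge|S|+2$ is right; the degree-sum identity $r|C|=2|E(G[C])|+|\delta_G(C)|$ survives loops (which add $2$ to both sides) and parallel edges, so $|\delta_G(C)|\equiv r\pmod 2$ and the edge-connectivity bound $|\delta_G(C)|\ge r-1$ indeed improves to $|\delta_G(C)|\ge r$; and the final count $r(|S|+2)\le r|S|+2(r-1)$ is sound, since every edge of $\bigcup_C\delta_G(C)$ either meets $S$ (contributing at most once per edge, at most $r|S|$ in total) or lies in $A$ (contributing at most twice). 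The only marginal cases, $r=1$ and the possibility that some odd component is all of $V(G)$, are handled by the observation that there are at least two odd components. So your argument is a legitimate, self-contained alternative to the paper's citation; it buys a uniform treatment of multigraphs without appealing to the simple-graph version, at the cost of redoing a classical argument the authors deliberately chose not to reproduce.
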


\section{Colouring defect of a cubic graph}
\label{sec:arrays}

\noindent{}In this section we discuss a number of structures
related to the concept of colouring defect, which will be
used in the proofs of Theorems~\ref{thm:1} and~\ref{thm:2}.
More details on this matter can be found in
\cite{S2}.

For a bridgeless cubic graph $G$ we define a \emph{$k$-array of
perfect matchings}, or briefly a \emph{$k$-array} of $G$, as an
arbitrary collection $\mathcal{M}=\{M_1, M_2, \ldots, M_k\}$ of
$k$ perfect matchings of $G$, not necessarily pairwise
distinct. The concept of a $k$-array unifies a number of
notions, such as Berge covers, Fulkerson covers, Fan-Raspaud
triples, and others. Our main concern here are $3$-arrays,
which we regard as approximations of $3$-edge-colourings.

Let $\mathcal{M}=\{M_1, M_2, M_3\}$ be a $3$-array of perfect
matchings of a cubic graph $G$. An edge of $G$ that belongs to
at least one of the perfect matchings of $\mathcal{M}$ will be
considered to be \emph{covered}. An edge will be called
\emph{uncovered}, \emph{simply covered}, \emph{doubly covered},
or \emph{triply covered} if it belongs, respectively, to zero,
one, two, or three distinct members of~$\mathcal{M}$.

Given a cubic graph $G$, it is natural task to maximise the
number of covered edges, or equivalently, to minimise the
number of uncovered ones. A $3$-array that leaves the minimum
number of uncovered edges will be called \emph{optimal}. The
number of edges left uncovered by an optimal $3$-array is the
\emph{colouring defect} of $G$, or the \emph{defect} for short,
denoted by $\df{G}$.

Let $\mathcal{M}=\{M_1, M_2, M_3\}$ be a $3$-array of perfect
matchings of a cubic graph $G$. One way to describe
$\mathcal{M}$ is based on regarding the indices $1$, $2$, and
$3$ as colours. It will therefore be convenient to
identify the indices with the nonzero elements of the group
$\mathbb{Z}_2\times\mathbb{Z}_2$. Since the same edge may
belong to more than one member of $\mathcal{M}$, an edge of $G$
may receive more than one colour. To each edge $e$ of $G$ we
can therefore assign the list $\phi(e)$ of all colours  in
lexicographic order it receives from $\mathcal{M}$. We let
$w(e)$ denote the number of colours in the list $\phi(e)$ and
call it the \emph{weight} of $e$ (with respect to
$\mathcal{M}$). In this way $\mathcal{M}$ gives rise to a
colouring
$$\phi\colon E(G)\to\{\emptyset, 1, 2, 3, 12, 13, 23, 123\}$$
where $\emptyset$ denotes the empty list. Obviously, a mapping
$\sigma$ assigning subsets of $\{1,2,3\}$ to the edges of $G$
determines a $3$-array if and only if, for each vertex $v$ of
$G$, each element of  $\{1,2,3\}$ occurs precisely once in the
subsets of $\{1,2,3\}$ assigned by $\sigma$ to the edges
incident with $v$. In general, $\phi$ need not be a proper
edge-colouring. However, since $M_1$, $M_2$, and $M_3$ are
perfect matchings, the only possibility when two edges equally
coloured under $\phi$ meet at a vertex is that both of them
receive colour $\emptyset$. In this case the third edge
incident with the vertex is coloured $123$.

A different but equivalent way of representing a $3$-array uses
a mapping
\[
\chi\colon E(G)\to \mathbb{Z}_2^3,
\quad e\mapsto \chi(e)=(x_1, x_2, x_3)
\]
defined by setting $x_i = 0$ if and only if $e\in M_i$, where
$i\in\{1,2,3\}$. Thus $x_i=1$ if and only if $e$ belongs
to the $2$-factor complementary to $M_i$, which implies that
$\chi$ is a $\mathbb{Z}_2^3$-flow. We call $\chi$ \emph{the
characteristic flow} for $\mathcal{M}$. Again, $\chi$ is a
nowhere-zero $\mathbb{Z}_2^3$-flow if and only if $G$ contains
no triply covered edge.

In the context of $3$-arrays the characteristic flow was
introduced in~\cite[p.~166]{JSM}. Observe that the
characteristic flow $\chi$ of a $3$-array and the colouring
$\phi$ determine each other. In particular, the condition on
$\phi$ requiring all three indices from $\{1,2,3\}$ to occur
precisely once in a colour around any vertex is equivalent to
Kirchhoff's law.

The following result characterises $3$-arrays with no triply
covered edge.

\begin{proposition}\label{prop:notriply}
Let $\mathcal{M}$ be a $3$-array of perfect matchings of a
cubic graph $G$. The following three statements are equivalent.
\begin{enumerate}[{\rm (i)}]
\item $G$ has no triply covered edge with respect to
    $\mathcal{M}$.
\item The associated colouring $\phi\colon
    E(G)\to\{\emptyset, 1,2, 3, 12, 13, 23, 123\}$ is
    proper.
\item The characteristic flow $\chi$ for $\mathcal{M}$,
    with values in $\mathbb{Z}_2^3$, is nowhere-zero.
\end{enumerate}
\end{proposition}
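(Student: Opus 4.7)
The plan is to prove the chain of equivalences by showing (i)$\Leftrightarrow$(iii) directly from the definition of the characteristic flow, and then handling (i)$\Leftrightarrow$(ii) by a direct local argument at an offending vertex. None of the three implications should require more than a few lines, since the proposition is essentially a translation between three different encodings of the same information.

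For (i)$\Leftrightarrow$(iii), I would simply unpack the definition: $\chi(e)=(x_1,x_2,x_3)$ with $x_i=0$ iff $e\in M_i$, so the zero vector is attained by $\chi$ exactly at edges belonging to $M_1\cap M_2\cap M_3$, i.e.\ triply covered edges. (That $\chi$ is a $\mathbb{Z}_2^3$-flow is already recorded in the surrounding text, using that $E(G)\setminus M_i$ is a $2$-factor.) Hence $\chi$ is nowhere-zero if and only if no edge is triply covered.

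The heart of the argument is (i)$\Leftrightarrow$(ii). For (i)$\Rightarrow$(ii), I would argue contrapositively: suppose $\phi$ is not proper and take two edges $e,f$ sharing a vertex $v$ with $\phi(e)=\phi(f)$. If some index $i$ lay in this common list, then both $e$ and $f$ would belong to $M_i$, contradicting the fact that $M_i$ is a matching; hence $\phi(e)=\phi(f)=\emptyset$. Let $g$ be the third edge at $v$. Since $M_i$ covers $v$ and cannot use $e$ or $f$, it must use $g$; as this holds for every $i\in\{1,2,3\}$, the edge $g$ is triply covered, contradicting~(i). For (ii)$\Rightarrow$(i), conversely, if an edge $e$ is triply covered and $v$ is an endpoint of $e$, then each $M_i$ uses $e$ at $v$, so the other two edges at $v$ belong to no $M_i$; both of them carry the colour $\emptyset$ under $\phi$, violating properness.

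There is no real obstacle in the argument: the only point that requires any care is the observation, already flagged in the paragraph preceding the proposition, that the sole way for two adjacent edges to share a colour under $\phi$ is for both to be uncovered, which is exactly the pigeonhole step used in (i)$\Rightarrow$(ii). The rest is bookkeeping on the definitions of $\chi$ and $\phi$.
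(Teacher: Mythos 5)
Your argument is correct and coincides with the reasoning the paper itself supplies: the paper states this proposition without a formal proof, having already recorded in the surrounding text both the observation that $\chi(e)=0$ exactly at triply covered edges and the observation that two adjacent edges can only share a $\phi$-colour if both are uncovered, forcing the third edge at that vertex to be coloured $123$. Your write-up simply makes these definitional unpackings explicit, so there is nothing to add.
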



The next important structure associated with a $3$-array is its
core. The \emph{core} of a $3$-array $\mathcal{M}=\{M_1, M_2,
M_3\}$ of $G$ is the subgraph of $G$ induced by all the edges
of $G$ that are not simply covered; we denote it by
$\core(\mathcal{M})$. The core is called \emph{optimal}
whenever $\mathcal{M}$ is optimal.

It is worth mentioning that if $G$ is $3$-edge-colourable and
$\mathcal{M}$ consists of three pairwise disjoint perfect
matchings, then $\core(\mathcal{M})$ is empty. If $G$ is not
$3$-edge-colourable, then every core must be nonempty.
Figure~\ref{fig:petersen_core} shows the Petersen graph endowed
with a $3$-array whose core is its  ``outer'' $6$-cycle. The
hexagon is in fact an optimal core.

\begin{figure}[h!]
 \centering
 \includegraphics[scale=1.4]{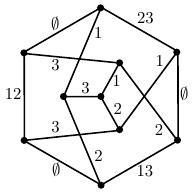}
 \caption{An optimal $3$-array of the Petersen graph}
\label{fig:petersen_core}
\end{figure}

The following proposition, due to Steffen \cite[Lemma~2.2]{S2},
describes the structure of optimal cores in the general case.

\begin{proposition}\label{prop:core}
Let $\mathcal{M}=\{M_1, M_2, M_3\}$ be an optimal $3$-array of
perfect matchings of a snark $G$. Then  every component of
$\core(\mathcal{M})$ is either an even circuit of length at
least $6$ or a subdivision of a cubic graph. Moreover, the
union of doubly and triply covered edges forms a perfect
matching of $\core(\mathcal{M})$.
\end{proposition}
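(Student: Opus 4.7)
The proof begins with a purely local weight analysis at each vertex. Since every vertex $v$ of $G$ lies on exactly one edge of each $M_i$, the three incident edges at $v$ have weights summing to $3$; the admissible multisets are therefore $(1,1,1)$, $(2,1,0)$, and $(3,0,0)$. The first leaves $v$ outside the core, the second makes $v$ a vertex of degree $2$ in $\core(\mathcal{M})$ incident with exactly one doubly covered and one uncovered edge, and the third makes $v$ a vertex of degree $3$ in $\core(\mathcal{M})$ incident with one triply covered and two uncovered edges. In every case each core vertex is incident with a unique edge that is either doubly or triply covered, which establishes the ``moreover'' clause at once.

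From this degree analysis the structural dichotomy is immediate: a component of the core all of whose vertices have core-degree $2$ is a circuit, while any component containing a vertex of core-degree $3$ is a subdivision of the cubic graph obtained by suppressing its degree-$2$ vertices. In a circuit component the doubly covered edges form a perfect matching of the circuit by the previous paragraph, forcing the length to be even. What remains is the lower bound on the length.

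The main obstacle is excluding circuit components of length $2$ and $4$, and the strategy is a local recolouring that contradicts the optimality of $\mathcal{M}$. For a $2$-cycle component consisting of parallel edges $e_1, e_2$ between vertices $u,v$, with $e_1$ doubly covered and $e_2$ uncovered, I would pick an $M_i$ containing $e_1$ and form $M_i' = (M_i \setminus \{e_1\}) \cup \{e_2\}$: because $e_1$ and $e_2$ have the same endpoints, $M_i'$ is again a perfect matching, and replacing $M_i$ by $M_i'$ in $\mathcal{M}$ yields a $3$-array with one fewer uncovered edge. For a $4$-cycle component $abcda$ with $ab, cd$ doubly covered and $bc, da$ uncovered, a pigeonhole argument on the three matchings (each of $ab, cd$ lies in two of them) yields some $M_i$ containing both $ab$ and $cd$; the replacement $M_i' = (M_i \setminus \{ab, cd\}) \cup \{bc, da\}$ is a perfect matching because the four vertices $a,b,c,d$ remain saturated, and after the swap $bc$ and $da$ become simply covered while $ab, cd$ merely drop from doubly to simply covered. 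In either case the uncovered count strictly decreases, contradicting optimality, so every circuit component of $\core(\mathcal{M})$ has length at least $6$.
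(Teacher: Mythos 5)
Your proof is correct and complete. Note that the paper itself gives no proof of this proposition but simply quotes it from Steffen \cite[Lemma~2.2]{S2}; your argument --- the local weight analysis showing each vertex sees weights $(1,1,1)$, $(2,1,0)$, or $(3,0,0)$, which yields the perfect-matching claim and the degree dichotomy, followed by the matching swaps that eliminate $2$- and $4$-cycle components by strictly decreasing the number of uncovered edges --- is a correct, self-contained version of the standard proof of that lemma.
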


\medskip

The next theorem characterises snarks with minimal possible
colouring defect. The lower bound for the defect of a snark --
the value $3$ -- is due to Steffen~\cite[Corollary~2.5]{S2}.

\begin{theorem}\label{thm:main}
Every snark $G$ has $\df{G}\ge 3$. Furthermore, the following
three statements are equivalent.
\begin{enumerate}[{\rm(i)}]
\item $\df{G}=3$.
\item The core of any optimal $3$-array of $G$ is a
    $6$-cycle.
\item $G$ contains an induced $6$-cycle $C$ such that the
    subgraph $G-E(C)$ admits a proper $3$-edge-colouring
    under which the six edges of $\delta(C)$ receive
    colours $1,1,2,2,3,3$
    with respect to the cyclic order induced by
    an orientation of $C$.
\end{enumerate}
\end{theorem}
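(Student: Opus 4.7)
The plan is to first verify $\df{G}\ge 3$ and then prove the equivalences along the cycle $\text{(i)}\Rightarrow\text{(ii)}\Rightarrow\text{(iii)}\Rightarrow\text{(i)}$, with $\text{(ii)}\Rightarrow\text{(iii)}$ being the delicate step. For the lower bound, I would fix any optimal $3$-array $\mathcal{M}$ of $G$; since $G$ is a snark, $\core(\mathcal{M})$ is nonempty. By Proposition~\ref{prop:core} each component is either an even circuit of length at least $6$, whose doubly/triply covered edges form a perfect matching and therefore leave at least $3$ of its edges uncovered, or a subdivision of a cubic graph $H$, which by a similar counting argument contains at least $|V(H)|\ge 4$ uncovered edges. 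Thus any nonempty core already contributes at least $3$ uncovered edges, and under the assumption $\df{G}=3$ it must consist of a single $6$-cycle $C$. Moreover, $C$ is induced: any chord would be simply covered and incident to two vertices of $C$ which already carry two core edges and one simply covered external edge, contradicting cubicity. This establishes $\text{(i)}\Leftrightarrow\text{(ii)}$.

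For $\text{(iii)}\Rightarrow\text{(i)}$, given an induced $6$-cycle $C=v_1v_2\cdots v_6$ and a proper $3$-edge-colouring $\sigma$ of $G-E(C)$ with the prescribed pattern $1,1,2,2,3,3$ on $\delta(C)$, I would extend $\sigma$ to a characteristic flow $\chi\colon E(G)\to\mathbb{Z}_2^3$ by setting $\chi(v_1v_2)=(1,0,0)$, $\chi(v_3v_4)=(0,1,0)$, $\chi(v_5v_6)=(0,0,1)$ on the three ``same-colour'' edges (which become doubly covered), and $\chi(v_2v_3)=\chi(v_4v_5)=\chi(v_6v_1)=(1,1,1)$ on the remaining three ``different-colour'' edges (which become uncovered). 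Using the identifications of colours $1,2,3$ with the flow values $(0,1,1),(1,0,1),(1,1,0)$, a direct Kirchhoff check at every vertex of $C$ shows that $\chi$ is a nowhere-zero $\mathbb{Z}_2^3$-flow on $G$. The three matchings $M_i=\{e\colon \chi(e)_i=0\}$ then form a $3$-array of $G$ whose only uncovered edges are $v_2v_3$, $v_4v_5$, and $v_6v_1$.

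The heart of the proof lies in $\text{(ii)}\Rightarrow\text{(iii)}$. Starting from an optimal $3$-array $\mathcal{M}$ whose core is a $6$-cycle $C$, a Kirchhoff computation at the endpoints of any hypothetical triply covered edge of $C$ rules such edges out, so the three doubly covered edges of $C$ carry $\chi$-values in $\{(1,0,0),(0,1,0),(0,0,1)\}$ and alternate around $C$ with the three uncovered edges carrying $(1,1,1)$. A further local Kirchhoff computation at each $v_j$ then shows that the colour of the simply covered edge at $v_j$ in the induced $3$-edge-colouring of $G-E(C)$ depends only on the $\chi$-value of the doubly covered core edge at $v_j$; since both endpoints of each doubly covered edge share this value, the cyclic pattern on $\delta(C)$ is necessarily of the form $(c,c,c',c',c'',c'')$ for some $c,c',c''\in\{1,2,3\}$. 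The essential step, and the place where I expect the main difficulty to lie, is to arrange that $c,c',c''$ are pairwise distinct. If the initial $\mathcal{M}$ does not already exhibit three distinct $\chi$-values on the doubly covered edges, I would try to redistribute them either by swapping matchings along carefully chosen alternating cycles in some $M_i\triangle M_j$, or equivalently by applying Kempe switches in $G-E(C)$ to obtain a different $3$-edge-colouring realising the balanced boundary pattern. Verifying that such a modification is always possible, using the optimality of $\mathcal{M}$ together with the parity constraints on $\delta(C)$, is the technical crux of the argument.
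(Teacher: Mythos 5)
The paper never actually proves Theorem~\ref{thm:main}: it is imported from Steffen \cite{S2} and the companion manuscript \cite{KMNSred}, so your argument has to stand on its own, and as written it does not. The step you yourself single out as the crux of (ii)$\Rightarrow$(iii) --- arranging that the three colours $c,c',c''$ on $\delta(C)$ are pairwise distinct --- is only described as something you ``would try'' to achieve by redistributing matchings, with the verification left open; that is a genuine gap, not a detail. Moreover, no redistribution is needed, because the unbalanced case simply cannot occur: the three doubly covered edges of $C$ carry $2$-element colour lists, and if these lists are not pairwise distinct then two of them coincide, whence all three share a common index $i$ (any two $2$-subsets of $\{1,2,3\}$ intersect). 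Since doubly covered and uncovered edges alternate along $C$, the cycle $C$ is then $M_i$-alternating, and replacing $M_i$ by $M_i\mathbin{\triangle}E(C)$ turns every edge of $C$ into a simply covered edge, giving a $3$-array with empty core, i.e.\ a proper $3$-edge-colouring of $G$ --- contradicting that $G$ is a snark (optimality of $\mathcal{M}$ is not even needed). The same swap is what you need to show $C$ is induced; your stated reason (``contradicting cubicity'') is circular, since the third edge at a vertex of $C$ could itself be the chord, so no valency is violated. A chord forces two distinct doubly covered edges of $C$ to carry the same list, and the swap again yields a $3$-edge-colouring.

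Two further soft spots. In the lower bound and in (i)$\Rightarrow$(ii) you assert that a core component which is a subdivision of a cubic graph $H$ has $|V(H)|\ge 4$; the paper explicitly allows multigraphs, so $H$ could a priori be the $3$-dipole, and a subdivided theta component (paths of lengths $1,1,3$) has exactly three uncovered edges while not being a $6$-cycle. Ruling this out requires the weight count at branch vertices (the matched core edge there has weight $3$, so both its ends must be branch vertices) plus a parallel-edge exchange that drops the number of uncovered edges below $3$; the unsubdivided $3$-dipole core is excluded only because a snark is $2$-connected. Finally, in (iii)$\Rightarrow$(i) your flow construction correctly yields $\df{G}\le 3$ (after the easy check that no vertex has a coordinate vanishing on all three incident edges, so that each $M_i$ really is a perfect matching), but you must still invoke the already established bound $\df{G}\ge 3$ for snarks to conclude equality.
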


\begin{figure}[h!]\centering
\includegraphics[scale=1.8]{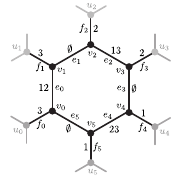}
\caption{The hexagonal core and its vicinity.}
\label{fig:core3}
\end{figure}

If $G$ is an arbitrary snark with $\df{G} = 3$, then, by
Theorem~\ref{thm:main}~(iii), $G$ contains an induced $6$-cycle
$C=(e_0e_1e_2e_3e_4e_5)$ such that $G-E(C)$ is
$3$-edge-colourable. We say that that $C$ is a \emph{hexagonal
core} of $G$.


Let $f_i$ denote the edge of $\delta(C)$ which is incident with
$e_{i-1}$ and $e_i$, where $i\in\{0,1,\ldots,5\}$ and the
indices are reduced modulo $6$; see Figure~\ref{fig:core3}.
Since $G-E(C)$ is $3$-edge-colourable but $G$ is not, it is not
difficult to see that for every $3$-edge-colouring
of $G-E(C)$ the cyclic order of colours around $C$ is
$(1,1,2,2,3,3)$ up to permutation of colours. Moreover, we can
assume that the values of the associated colouring $\phi$ of
$G$ induced by $\mathcal{M}$ in the vicinity of $C$ are those
as shown in Figure~\ref{fig:core3}, or can be obtained from
them by the rotation one step clockwise. Note that the two
possibilities only depend on the position of the uncovered
edges. In any case, there are no triply covered edges, and so
$\phi$ is a proper edge colouring due to
Proposition~\ref{prop:core}.

\section{Tools}\label{sec:tools}
\noindent{}In this section we establish tools for proving
Theorems~\ref{thm:1} and~\ref{thm:2}. Its main results are
stated as Theorems~\ref{thm:6cuts} and~\ref{thm:abg_class1},
both having somewhat ``bipartite flavour''. The first theorem
suggests that if a subgraph $H$ of a bridgeless cubic graph is
separated from the rest by a $6$-edge-cut and has no perfect
matching, then the structure of $H$ is -- essentially -- that
of a bipartite cubic graph. Moreover, only one of the two parts
is incident with the cut.

\begin{theorem}\label{thm:6cuts}
Let $G$ be a bridgeless cubic graph and let $H\subseteq G$ be a
subgraph with $|\delta_G(H)|=6$. Then $H$ has a perfect
matching, or else $H$ contains an independent set $S$ of
vertices such that
\begin{enumerate}[{\rm (i)}]
\item every vertex of $S$ is trivalent in $H$,
\item $\odd(H-S) =|S|+ 2$, and
\item $|\delta_G(L)| = 3$ for each component $L$ of $H-S$.
\end{enumerate}
\end{theorem}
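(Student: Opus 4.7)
The natural tool is Tutte's theorem (Theorem~\ref{thm:Tutte}): assuming $H$ has no perfect matching, I pick any witnessing set $S\subseteq V(H)$, i.e., one with $\odd(H-S)>|S|$. My strategy is to squeeze $\odd(H-S)-|S|$ between two bounds, the lower coming from Tutte plus a parity argument and the upper coming from an edge count involving $\delta_G(H)$. Equality in the upper bound will then force all of the structure claimed by the theorem, so \emph{any} set $S$ delivered by Tutte will automatically satisfy (i)--(iii).

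For the lower bound, the cubic degree identity $3|V(H)|=2|E(H)|+|\delta_G(H)|=2|E(H)|+6$ shows that $|V(H)|$ is even. Since $|V(H)|-|S|=\sum_L|V(L)|$ over components $L$ of $H-S$, the quantity $\odd(H-S)-|S|$ has the same parity as $|V(H)|$ and is therefore even, upgrading Tutte's $\odd(H-S)\ge|S|+1$ to $\odd(H-S)\ge|S|+2$.

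For the upper bound, I would double-count $\sum_L|\delta_G(L)|$. Writing $e(S)$ for the number of edges of $G$ with both ends in $S$ and $s_\partial$ for the number of edges of $\delta_G(H)$ whose $H$-end lies in $S$, a degree sum on $S$ gives $\sum_L a_L=3|S|-2e(S)-s_\partial$, where $a_L$ is the number of edges between $S$ and the component $L$; meanwhile partitioning $\delta_G(H)$ according to which component receives each edge gives $\sum_L b_L=6-s_\partial$, where $b_L$ is the number of edges of $\delta_G(H)$ with $H$-end in $L$. Hence $\sum_L|\delta_G(L)|=3|S|+6-2(e(S)+s_\partial)$. For each odd component $L$, the identity $|\delta_G(L)|=3|V(L)|-2|E(L)|$ makes $|\delta_G(L)|$ odd, and bridgelessness of $G$ promotes the trivial bound $|\delta_G(L)|\ge 2$ to $|\delta_G(L)|\ge 3$ (note that $V(L)\subsetneq V(G)$ because $|\delta_G(H)|=6>0$). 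Consequently
\[
3\,\odd(H-S)\;\le\;\sum_{L\ \text{odd}}|\delta_G(L)|\;\le\;\sum_L|\delta_G(L)|\;=\;3|S|+6-2\bigl(e(S)+s_\partial\bigr),
\]
which rearranges to $\odd(H-S)\le|S|+2-\tfrac{2}{3}(e(S)+s_\partial)$.

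Combining the two bounds forces $\odd(H-S)=|S|+2$ and collapses every slack: no even component of $H-S$ (giving (i)), the equality $\odd(H-S)=|S|+2$ itself (giving (ii)), $|\delta_G(L)|=3$ for every component (giving (iii)), $e(S)=0$ (so $S$ is independent), and $s_\partial=0$ (so no vertex of $S$ is incident with $\delta_G(H)$, i.e.\ every vertex of $S$ is trivalent in $H$). The step I expect to need the most care with is the bookkeeping in the double count, in particular the joint use of parity and bridgelessness to upgrade $|\delta_G(L)|\ge 2$ to $|\delta_G(L)|\ge 3$ for odd components, together with the small but necessary check that no component $L$ of $H-S$ can coincide with all of $G$.
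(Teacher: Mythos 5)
Your proof is correct and takes essentially the same route as the paper's: Tutte's theorem plus the parity of $|V(H)|$ gives $\odd(H-S)\ge |S|+2$, while counting the edges leaving the components of $H-S$ (each odd component emitting at least three by bridgelessness combined with the parity of $|\delta_G(L)|$) against the edges available from $S$ and from $\delta_G(H)$ gives the matching upper bound, and equality then forces (i)--(iii) together with the independence and trivalency of $S$. The only difference is bookkeeping: you double-count $\sum_L|\delta_G(L)|$ and carry $e(S)$ and $s_\partial$ explicitly, whereas the paper sandwiches $|\delta_H(S)|$ between $3\cdot\odd(H-S)-6+a$ and $3|S|-a$ with $a=s_\partial$, recovering $e(S)=0$ only at the final equality.
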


\begin{proof}
Set $K=G-V(H)$. Assume that $H$ has no perfect matching. Tutte's
Theorem tells us that there exists a set $S\subseteq V(H)$ such
that $\odd(H-S)>|S|$. As $H$ has an even number of vertices,
the numbers $|S|$ and $\odd(H-S)$ have the same parity, so
\begin{equation}\label{eq:loc1}
\odd(H-S)-|S|\ge 2.
\end{equation}
Set $a=|\delta_G(S)\cap\delta_G(K)|$. Clearly,
$a\in\{0,1,\ldots,6\}$. Since $|\delta_H(S)|+a=|\delta_G(S)|\le
3|S|$, we get
\begin{equation}\label{eq:dS-le}
|\delta_H(S)|\le 3|S|-a.
\end{equation}
To bound $|\delta_H(S)|$ from below, we first realise that each
odd component of $H-S$ is incident with at least three edges of
$\delta_G(H-S)$ because $G$ is bridgeless. Moreover, there are
$6-a$ edges joining $H-S$ to $K$ (see Figure~\ref{fig:tutte}).
Therefore
\begin{equation}\label{eq:dS-ge}
|\delta_H(S)|=|\delta_H(H-S)|=|\delta_G(H-S)|-(6-a)\ge 3\cdot\odd(H-S)-(6-a).
\end{equation}
If we combine \eqref{eq:dS-le} with \eqref{eq:dS-ge}, we get
\begin{equation}\label{eq:loc2}
3\cdot\odd(H-S)-6+a \le |\delta_H(S)| \le 3|S|-a.
\end{equation}
Since $3\cdot\odd(H-S)-3|S|\ge 6$ according to \eqref{eq:loc1}, we
can rewrite \eqref{eq:loc2} as
$$-2a\ge 3\cdot\odd(H-S)-3|S|-6\ge 0,$$
which implies that $a=0$.

Now we insert $a=0$ into
\eqref{eq:loc2} and get $3\cdot\odd(H-S)-3|S|\le 6$. Together
with \eqref{eq:loc1}, multiplied by $3$, this yields that
$$3\cdot\odd(H-S)-6\le |\delta_H(S)| \le 3|S|\le 3\cdot\odd(H-S)-6.$$
\begin{figure}[H]
\centering
\includegraphics[width=0.7\textwidth]{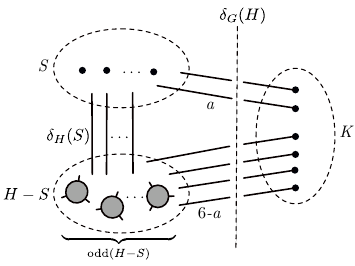}
\caption{The structure of $G$ as described in the proof of
Theorem~\ref{thm:6cuts}.}
\label{fig:tutte}
\end{figure}
\noindent
Hence,
\begin{equation}\label{eq:S+2}
|\delta_H(S)|=3|S|=3\cdot\odd(H-S)-6,
\end{equation}
which implies that $S$ is an independent set of $H$, with all
its vertices $3$-valent, and that each component $L$ of $H-S$
is odd with $|\delta_G(L)|=3$.  Thus we have proved Statements
(i) and (iii). Moreover, Equation~\eqref{eq:S+2} implies
(ii). The proof is complete.
\end{proof}

\begin{example}
{\rm We illustrate Theorem~\ref{thm:6cuts} in two simple
instances. First, if $G$ is the $3$-dimensional cube $Q_3$ and
$H=G-V(C)$, where $C$ is an induced $6$-cycle of $G$, then
Theorem~\ref{thm:6cuts} holds with $S=\emptyset$. If $G$ is the
Petersen graph $Pg$ and $H= G-V(C)$, where $C$ is again a
$6$-cycle, then $H$ is isomorphic to the complete bipartite
graph $K_{1,3}$ and $S$ is constituted by its central vertex.}
\end{example}

We proceed to our second tool, which describes cubic graphs
just one step away from being bipartite. Recall that if a cubic
graph is bipartite, then it is obviously bridgeless. Moreover,
if it is connected, then its bipartition is uniquely
determined. On the other hand, if a cubic graph is not
bipartite, then, clearly, at least two edges have to be removed
in order to produce a bipartite graph. Motivated by these two
facts we define a cubic graph $G$ to be \emph{almost bipartite}
if it is bridgeless, not bipartite, and contains two edges $e$
and $f$ such that $G-\{e,f\}$ is a bipartite graph. The edges
$e$ and $f$ are said to be \emph{surplus edges} of $G$. If a
cubic graph is almost bipartite, then  there exists a component
$K$ of $G$ such that the surplus edges connect vertices within
different partite sets of $K$.

Our aim is to show that every almost bipartite graph is
$3$-edge-colourable. We start with the following.

\begin{proposition}\label{prop:almost_bip}
Every almost bipartite cubic graph has a perfect matching that
contains both surplus edges.
\end{proposition}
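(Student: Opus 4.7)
The plan is to reduce the proposition to a perfect matching question in a bipartite graph and then apply Hall's theorem. Let $e = u_1 u_2$ and $f = x_1 x_2$ be the surplus edges. By the discussion preceding the statement, I may assume $G$ is connected with bipartition $(A, B)$ of $G - \{e, f\}$; moreover, one surplus edge lies inside $A$ and the other inside $B$, so $e$ and $f$ are vertex-disjoint and $|A| = |B|$ by a degree-sum count in $G - \{e, f\}$. The graph $H := G - \{u_1, u_2, x_1, x_2\}$ is therefore bipartite with equal parts $A' := A \setminus \{u_1, u_2\}$ and $B' := B \setminus \{x_1, x_2\}$, and a perfect matching of $H$ together with $e$ and $f$ is the desired perfect matching of $G$.

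The second step is to apply Hall's theorem to $H$. Suppose, for contradiction, that some $S \subseteq A'$ has $|T| < |S|$, where $T := N_H(S)$. Since the only $A$-$A$ edge $e$ is not incident with $S$, every edge leaving $S$ in $G$ reaches either $T$ or $\{x_1, x_2\}$, and each of $x_1, x_2$ has only two neighbours in $A$ (its third edge being $f$). Denoting these edge counts $e_{ST}$ and $e_{S\{x_1, x_2\}}$, this yields
$$3|S| = e_{ST} + e_{S\{x_1,x_2\}}, \qquad e_{S\{x_1, x_2\}} \le 4.$$
A symmetric count at $T$, using that $f$ is the only $B$-$B$ edge and is not incident with $T$, gives
$$3|T| = e_{ST} + e_{T, A' \setminus S} + e_{T, \{u_1, u_2\}}.$$
Subtracting forces $3(|S| - |T|) \le 4$, so the only way Hall's condition can fail is $|T| = |S| - 1$ together with $e_{S\{x_1, x_2\}} \in \{3, 4\}$.

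The decisive step, which I expect to be the only delicate point, will be to extract a contradiction from this extremal configuration via the cut $W := S \cup T \cup \{x_1, x_2\}$. A direct enumeration of the edges leaving $W$ yields
$$|\delta_G(W)| = (e_{T, A' \setminus S} + e_{T, \{u_1, u_2\}}) + (4 - e_{S\{x_1, x_2\}}),$$
and substituting $|T| = |S| - 1$ into the two edge-count identities eliminates $e_{ST}$ to give $e_{T, A' \setminus S} + e_{T, \{u_1, u_2\}} = e_{S\{x_1, x_2\}} - 3$. Therefore $|\delta_G(W)| = 1$ identically, exhibiting a bridge of $G$ and contradicting bridgelessness. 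This completes the verification of Hall's condition, so $H$ has a perfect matching and the proposition follows.
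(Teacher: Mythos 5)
Your proof is correct, but it takes a genuinely different route from the paper's. The paper invokes Plesn\'{\i}k's theorem (Theorem~\ref{thm:Plesnik}) to obtain a perfect matching $M$ containing $e$, and then gets $f\in M$ for free by a counting argument: $e$ covers two vertices of $A$, every other edge of $M$ except possibly $f$ joins $A$ to $B$, so exactly $n-2$ edges of $M$ match $A\setminus V(e)$ into $B$, leaving two vertices of $B$ that can only be saturated by $f$. You instead delete all four endvertices of $e$ and $f$ and verify Hall's condition for the resulting balanced bipartite graph $H$ directly. I checked your computation: the degree counts $3|S|=e_{ST}+e_{S\{x_1,x_2\}}$ and $3|T|=e_{ST}+e_{T,A'\setminus S}+e_{T,\{u_1,u_2\}}$ are right (no edge of $S$ or $T$ can stay inside its own side, since $e$ and $f$ are the only non-bipartite edges and avoid $A'$ and $B'$ respectively), the forced values $|S|-|T|=1$ and $e_{T,A'\setminus S}+e_{T,\{u_1,u_2\}}=e_{S\{x_1,x_2\}}-3$ follow, and the cut $W=S\cup T\cup\{x_1,x_2\}$ indeed has $|\delta_G(W)|=1$ with both sides nonempty (it contains $x_1,x_2$ but not $u_1,u_2$), contradicting bridgelessness. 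The paper's proof is shorter and delegates the use of $2$-edge-connectivity to Plesn\'{\i}k's theorem; yours is self-contained apart from Hall's theorem and makes explicit exactly where bridgelessness enters, at the cost of the somewhat delicate cut enumeration, which you carried out correctly.
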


\begin{proof}
Let $G$ be an almost bipartite cubic graph with surplus edges
$e$ and $f$. Since $e$ and $f$ belong to the same component of
$G$, we may assume that $G$ is connected. Let $\{A,B\}$ be the
bipartition of $G'=G-\{e,f\}$. As said before, the endvertices
of one of the surplus edges, say $e$, belong to $A$ and those
of $f$ then belong to $B$. In particular, this means that
$|A|=|B|=n$ for some positive integer $n$. By
Theorem~\ref{thm:Plesnik}, there exists a perfect matching $M$
containing the edge~$e$. There are $n-2$ edges of $M$ that
match $n-2$ vertices of $A$ to $n-2$ vertices of $B$. It
follows that $f\in M$.
\end{proof}

We would like to mention that our
Proposition~\ref{prop:almost_bip} has been inspired by
Lem\-ma~3~(iii) of \cite{RLER}. In principle, the mentioned
lemma could be used to prove Proposition~\ref{prop:almost_bip},
however, the proof would not be anything close to being
straightforward.

\medskip

We are now ready for the following theorem.

\begin{theorem}\label{thm:abg_class1}
Every almost bipartite cubic graph is $3$-edge-colourable.
\end{theorem}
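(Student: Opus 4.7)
The plan is to deduce the theorem directly from Proposition~\ref{prop:almost_bip}. The standard way to exhibit a $3$-edge-colouring of a cubic graph is to find a perfect matching whose complement is a $2$-factor consisting of \emph{even} circuits: colour the matching with one colour and alternate the remaining two colours along each even circuit. So the task reduces to producing such a well-behaved perfect matching in an almost bipartite cubic graph.

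Let $G$ be almost bipartite with surplus edges $e$ and $f$, and let $\{A,B\}$ be the bipartition of $G'=G-\{e,f\}$, with both endvertices of $e$ in $A$ and both endvertices of $f$ in $B$. First I would invoke Proposition~\ref{prop:almost_bip} to obtain a perfect matching $M$ of $G$ with $\{e,f\}\subseteq M$. Since $G$ is cubic and $M$ is perfect, the complement $G-M$ is a $2$-factor, hence a disjoint union of circuits.

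The crucial observation is that $e,f\in M$ implies $e,f\notin E(G-M)$, and therefore every circuit of $G-M$ is actually a circuit of the bipartite graph $G'=G-\{e,f\}$. Bipartite graphs contain no odd circuits, so every component of $G-M$ is an even circuit. Colouring the edges of $M$ with $3$ and alternating $1$ and $2$ along each of these even circuits then yields a proper $3$-edge-colouring of $G$, which finishes the proof.

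There is really no obstacle to overcome here: once Proposition~\ref{prop:almost_bip} is available, the argument is a one-line reduction to the trivial fact that a cubic graph with a perfect matching whose removal leaves only even circuits is $3$-edge-colourable. The entire mathematical content of Theorem~\ref{thm:abg_class1} has already been placed in Proposition~\ref{prop:almost_bip}, whose non-triviality in turn comes from Plesn\'\i k's theorem applied to the bipartite spanning subgraph $G'$.
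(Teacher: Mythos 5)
Your argument is correct and is essentially identical to the paper's own proof: both obtain, via Proposition~\ref{prop:almost_bip}, a perfect matching $M$ containing the two surplus edges, observe that $G-M$ is then a $2$-regular spanning subgraph of the bipartite graph $G-\{e,f\}$ and hence a union of even circuits, and split it into two further perfect matchings to complete the $3$-edge-colouring. No gaps.
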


\begin{proof}
Let $G$ be an almost bipartite graph with surplus edges $e$ and
$f$. By Proposition~\ref{prop:almost_bip}, $G$ has a perfect
matching $M$ which includes both $e$ and $f$. Now, $G-M$ is
bipartite $2$-regular spanning subgraph of $G$, so each
component of $G-M$ is an even circuit. It means that $G-M$ can
be decomposed into two disjoint perfect matchings $N_1$ and
$N_2$. Put together, $M$, $N_1$, and $N_2$ are colour classes
of a proper $3$-edge-colouring of $G$.
\end{proof}

The previous theorem links the problem of
$3$-edge-colourability of cubic graphs to an important concept
of a bipartite index, which has been introduced to measure how
far a graph is from being bipartite. Following \cite[Definition
2.4]{Th2}, we define the \emph{bipartite index} {$\bi(G)$ of a
graph $G$ to be the smallest number of edges that must be
deleted in order to make the graph bipartite. In other words,
the bipartite index of a graph is the number of edges outside a
maximum cut (by which we mean the edge cut $\delta_G(X)$
with  $|\delta_G(X)|$ maximum).

\begin{figure}[H]
\centering
\includegraphics[width=0.35\textwidth]{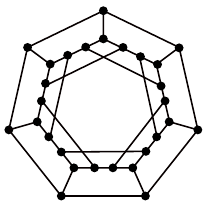}
\caption{The flower snark $J_7$}
\label{fig:j7}
\end{figure}

Our Theorem~\ref{thm:abg_class1} states that all bridgeless
cubic graphs with bipartite index at most two are $3$-edge
colourable. This is, in fact, best possible as there exist
infinitely many nontrivial snarks whose bipartite index equals
$3$: this is true, for example, for the Petersen graph or for
all Isaacs flower snarks $J_{2n+1}$; see \cite{Isaacs} for the
definition and Figure~\ref{fig:j7} for $J_7$.

Theorem~\ref{thm:abg_class1} thus suggests that, along with
oddness, resistance, flow resistance, and other similar
invariants extensively studied in \cite{FMS-survey}, bipartite
index can serve as another measure of uncolourability of cubic
graphs. It is easy to see, for example, that for a cubic graph
$\bi(G)\geq \omega(G)$, where $\omega(G)$ denotes the
\emph{oddness} of a cubic graph, that is, the smallest number
of odd circuits in a $2$-factor of $G$. Since there exist
snarks of arbitrarily large oddness \cite{LMMS}, there are
snarks of arbitrarily large bipartite index.

\section{Proof of Theorem~\ref{thm:1}}\label{sec:thm1}

\noindent{}The purpose of this section is to establish the
existence of a Berge cover for every bridgeless cubic graph of
defect $3$ (Theorem~\ref{thm:1}) and for every cyclically
$4$-edge-connected cubic graph of defect $4$. We begin with the
following auxiliary result.

\begin{lemma}\label{lem:M4}
Let $G$ be a cubic graph of defect $3$ and let
$\mathcal{M}=\{M_1,M_2,M_3\}$ be an optimal $3$-array of
perfect matchings for $G$. Then $G$ has a fourth perfect
matching $M_4$ which covers at least two of the three edges
left uncovered by $\mathcal{M}$.
\end{lemma}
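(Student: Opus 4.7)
The plan is to translate the existence of $M_4$ into a perfect-matching problem for certain subgraphs of $H = G - V(C)$, where $C = (w_0 w_1 \cdots w_5)$ is the hexagonal core of $\mathcal{M}$ supplied by Theorem~\ref{thm:main}(iii). Put $e_i = w_i w_{i+1}$ and, after relabelling, assume $e_1, e_3, e_5$ are the uncovered edges and $e_0, e_2, e_4$ the doubly covered ones; let $f_i = w_i v_i$ be the boundary edges of $C$. A perfect matching $M_4$ of $G$ that contains, say, $e_1$ and $e_3$ must match $w_0$ and $w_5$ either both by $e_5$ (whence $M_4 \cap E(H)$ is a perfect matching of $H$) or separately by $f_0$ and $f_5$ (whence $M_4 \cap E(H)$ is a perfect matching of $H - \{v_0, v_5\}$); analogous statements hold for the pairs $\{e_3, e_5\}$ and $\{e_1, e_5\}$. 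Hence it suffices to produce a perfect matching of one of $H$, $H - \{v_0, v_5\}$, $H - \{v_1, v_2\}$, or $H - \{v_3, v_4\}$.

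If $H$ itself admits a perfect matching, we are done by extending it with $\{e_1,e_3,e_5\}$. Otherwise, the hypothesis $|\delta_G(H)| = 6$ together with Theorem~\ref{thm:6cuts} yields an independent set $S$ of trivalent vertices of $H$ such that $H - S$ consists of $|S| + 2$ odd components $L_1, \ldots, L_{|S|+2}$, each joined to the rest of $G$ by precisely three edges; a simple edge count forces every $v_i$ to sit inside some $L_j$. Meanwhile the three given matchings restrict on $H$ to perfect matchings $N_1, N_2, N_3$ of $H - \{v_0, v_1\}$, $H - \{v_2, v_3\}$, $H - \{v_4, v_5\}$, respectively, and Tutte's theorem applied to each $N_i$ forces the two vertices of each ``bad'' pair $\{v_0, v_1\}$, $\{v_2, v_3\}$, $\{v_4, v_5\}$ to lie in distinct components of $H - S$ whose associated residues $L - v$ decompose into even components only.

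The main step is then a symmetric-difference argument. Each of $N_1 \triangle N_2$, $N_1 \triangle N_3$, $N_2 \triangle N_3$ is a subgraph of $H$ with exactly four vertices of odd degree (among the $v_i$'s), consisting of even cycles together with two alternating paths joining these endpoints in one of three possible pairings. Swapping $N_i$ along a suitable path replaces the missed pair in the domain of $N_i$ by a new one, and I expect that in most pairings this directly yields a perfect matching either of $H$ or of one of the good subgraphs $H - \{v_0, v_5\}$, $H - \{v_1, v_2\}$, $H - \{v_3, v_4\}$. The main obstacle, and the likely technical heart of the proof, is the single ``cyclic'' configuration in which each of the three symmetric differences pairs its endpoints across the hexagon; I plan to rule this out by combining the Tutte-bad structure of $H$ with the Parity Lemma applied to each 3-edge-cut $\delta_G(L_j)$ and the prescribed colour distribution of the $f_i$'s, in order to force at least one good pair $\{v_i,v_j\}$ to satisfy the Tutte condition in $H - \{v_i, v_j\}$.
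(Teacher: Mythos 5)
Your reduction is sound as far as it goes: a perfect matching of $G$ through two uncovered edges of the hexagon corresponds exactly to a perfect matching of $H=G-V(C)$ or of $H-\{v_i,v_{i+1}\}$ for one of the three pairs lying across an uncovered edge, and the restrictions $N_1,N_2,N_3$ do behave as you describe (subject to the caveat that the outer neighbours need not be distinct -- by Lemma~\ref{lem:3+4} a triangle or quadrilateral may meet $C$ in an uncovered edge, so for instance $v_1=v_2$ is possible, in which case $f_1$ and $f_2$ share a vertex, the pair $\{v_1,v_2\}$ is unusable, and the degree count in the symmetric differences changes; these degeneracies are never addressed). The genuine gap is the step you yourself flag. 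For each $N_i\triangle N_j$ only two pairings of the four terminals can occur, since the third would produce a perfect matching of $H$, excluded by hypothesis; one of the two is favourable, but the other is not -- e.g.\ for $N_1\triangle N_2$ the pairing $\{v_0v_3,\,v_1v_2\}$ only yields perfect matchings of $H-\{v_1,v_3\}$ and $H-\{v_0,v_2\}$, and neither of these extends to a perfect matching of $G$ containing two uncovered edges. You must therefore exclude the configuration in which all three symmetric differences realise their unfavourable pairing, and for this you offer only an intention (``I plan to rule this out\dots''), not an argument. Note that there is no parity obstruction at the level of the pairings themselves: the three bad pairings are mutually consistent, forming a $6$-cycle $v_0v_3v_4v_1v_2v_5$ on the terminals, so the Parity Lemma applied to the cuts $\delta_G(L_j)$ together with the colour pattern on the $f_i$ is very unlikely to suffice on its own; some genuinely global input is missing.

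For comparison, this is exactly where the paper invests its effort, and it does so by a global recolouring argument rather than a local one. It calls an uncovered edge $e_i$ \emph{bad} if some cycle-separating $3$-edge-cut contains the two adjacent doubly covered edges $e_{i-1}$ and $e_{i+1}$, shows by an explicit Kempe-chain construction that if all three uncovered edges were bad then $G$ would admit a $3$-edge-colouring (contradicting that $G$ is a snark), and then applies Theorem~\ref{thm:6cuts} not to $G-V(C)$ but to $G-V(P)$ for the length-$3$ path $P$ of $C$ opposite a non-bad edge: the absence of the relevant $3$-cut is precisely what makes the Tutte obstruction of Theorem~\ref{thm:6cuts} impossible there, yielding directly a perfect matching through two uncovered edges. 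Your missing step would have to encode something equivalent to that Kempe-chain argument; as written, the proposal is an incomplete proof.
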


\begin{proof}
Consider the $6$-cycle $C=(e_0e_1\ldots e_5)$ constituting the
core of $\mathcal{M}$. We adopt the notation introduced in
Figure~\ref{fig:core3}; in particular, $e_1$, $e_3$, and $e_5$
are the three uncovered edges of~$C$. We also assume that the
edge colouring $\phi$ associated with $\mathcal{M}$ takes
values as indicated in Figure~\ref{fig:core3}. By
Proposition~\ref{prop:notriply}, the colouring is proper.

An uncovered edge $e_i$ of $C$ will be considered \emph{bad} if
$G$ has a cycle-separating $3$-edge-cut $R_i$ containing the
edges $e_{i-1}$ and $e_{i+1}$. We claim that not all of the
edges $e_1$, $e_3$, and $e_5$ can be bad. Suppose to the
contrary that all of them are bad. For $i\in\{1,3,5\}$ let
$h_i$ denote the third edge of the cut $R_i$. Since the edges
$f_i$ and $f_{i+1}$ of $\delta_G(C)$ are both adjacent to
$e_i$, the set $R_i'=\{f_{i}, f_{i+1},h_i\}$ is also a
$3$-edge-cut. Moreover, all three edges of $R_i'$ are simply
covered. Let $L_i$ be the component of $G-R_i'$ that does not
contains $e_i$.

Recall that the restriction of $\phi$ to $G-E(C)$ is a proper
$3$-edge-colouring. We show that this colouring can be modified
and extended to a $3$-edge-colouring $\psi$ of the entire~$G$.
By Kirchhoff's law, $\phi(f_1)+\phi(f_2)+\phi(h_1)=0$. Since
$\phi(f_1)=3$, and $\phi(f_2)=2$, we have $\phi(h_1)=1$.
Similarly, $\phi(h_3)=3$ and $\phi(h_5)=2$. It follows
that the edges $h_1$, $h_2$, and $h_3$ are pairwise distinct
and constitute a $3$-edge-cut; see Figure~\ref{fig:constr}. To
produce the colouring $\psi$ we proceed as follows. First, for
each component $L_i$, where $i\in\{1,3,5\}$, we swap the
colours of the edges $f_i$ and $f_{i+1}$ by employing a Kempe
chain through $L_i$. The pairs of edges $(f_i, f_{i+1})$ from
$\delta_G(L_i)$ are now coloured $(2,3)$, $(1,2)$, and $(3,1)$,
respectively. Note that the colours on the hexagon $C$ have not
been affected. Now we can define $\psi$. If $\phi'$ denotes the
current assignment of colours, for each uncovered edge $e_i$,
where $i\in\{1,3,5\}$, we set
$\psi(e_i)=\phi'(f_i)+\phi'(f_{i+1})$. For each doubly covered
edge $e_i$, where $i\in\{0,2,4\}$, we set $\psi(e_i)=p+q$
whenever $\phi(e_i)=pq$. It is easy to check that $\psi$ is
indeed a proper $3$-edge-colouring of $G$. Since $G$ is a
snark, this cannot occur. Therefore, at least one uncovered
edge of $C$ must not be bad.

\begin{figure}[h]
\centering
\includegraphics[width=0.8\textwidth]{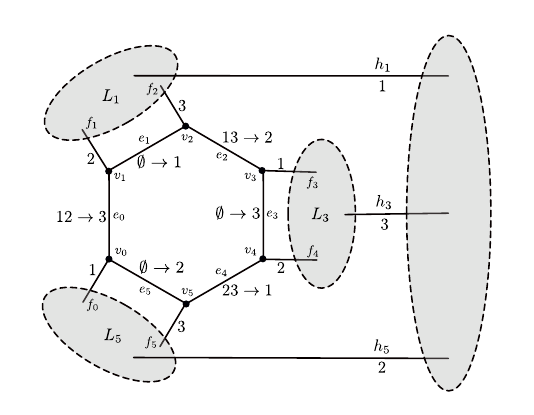}
\caption{Constructing a $3$-edge-colouring of $G$ provided
that $G$ has three bad edges; the situation after Kempe switches.}
\label{fig:constr}
\end{figure}

Without loss of generality we may assume that the edge $e_1$ is
not bad. Take the path $P=e_3e_4e_5\subseteq C$ and set
$H=G-V(P)$. Clearly, $|\delta_G(H)|=6$, so
Theorem~\ref{thm:6cuts} applies. We claim that $H$ admits a
perfect matching. Suppose not. Then, by
Theorem~\ref{thm:6cuts}, $H$ contains an independent set $S$ of
trivalent vertices such that each component $L$ of $H-S$ has
$|\delta_G(L)|=3$ and each edge of $\delta_G(H)$ joins a vertex
of $H-S$ to a vertex of $P$. Since the edge $e_1$ belongs to
$H-S$, the component containing $e_1$ is nontrivial. It follows
that $e_1$ is bad, and we have arrived at a contradiction. Thus
$H$ admits a perfect matching, and this matching is readily
extended to a perfect matching $M_4$ of $G$ that covers the
uncovered edges $e_3$ and $e_5$. The proof is complete.
\end{proof}

\begin{proof}[Proof of Theorem~\ref{thm:1}]
Take an optimal $3$-array $\{M_1,M_2,M_3\}$ for $G$. It leaves
three uncovered edges. According to Lemma~\ref{lem:M4}, there
is a perfect matching $M_4$ that covers at least two of them.
The remaining uncovered edge (if any) can be covered by a
perfect matching $M_5$ guaranteed by Theorem~\ref{thm:Plesnik}.
Together these perfect matchings constitute a Berge cover of
$G$.
\end{proof}

In the rest of this section we prove that every cyclically
$4$-edge-connected cubic graph with defect $4$ admits a Berge
cover. Our proof significantly differs from the one provided by
Steffen \cite[Theorem~2.14]{S2} in that it avoids the use of
Seymour's results on $p$-tuple multicolourings
\cite{Seymour-multi} by employing Theorem~\ref{thm:6cuts}
instead.

First we establish the following lemma.

\begin{lemma}\label{lem:cycliccore}
Let $G$ be a cyclically $4$-edge-connected cubic graph. If $G$
has an optimal $3$-array whose core is a circuit of length $d$,
then $\pi(G)\le 3+\lceil d/4\rceil$.
\end{lemma}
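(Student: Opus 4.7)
The plan is to augment the optimal $3$-array $\mathcal{M}=\{M_1,M_2,M_3\}$ with $\lceil d/4\rceil$ additional perfect matchings so that together they cover every edge left uncovered by~$\mathcal{M}$. Let $C=v_0v_1\ldots v_{d-1}v_0$ be the core circuit. At each vertex of $C$, the third incident edge (not in the core) is simply covered, so the two incident core edges have weights in $\{0,2,3\}$ summing to~$2$. Only the pair $\{0,2\}$ achieves this, whence $d$ is even and the edges of $C$ alternate between uncovered (weight $0$) and doubly covered (weight $2$); in particular, exactly $d/2$ edges of $C$ are uncovered.

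Next, I group the uncovered edges into pairs of two consecutive uncovered edges, each pair $(e_{j-1},e_{j+1})$ being separated by a single doubly covered edge $e_j$; this uses $\lfloor d/4\rfloor$ pairs and leaves at most one unpaired uncovered edge (when $d\equiv 2\pmod 4$). Any unpaired edge is covered by a perfect matching containing it, supplied by Plesn\'\i k's theorem. For each pair, set $P=v_{j-1}v_jv_{j+1}v_{j+2}$; a degree count gives $|\delta_G(V(P))|\in\{4,6\}$, the value $4$ arising only when $v_{j-1}v_{j+2}$ is a chord of $C$ in $G$ (triangle chords are excluded since cyclic $4$-edge-connectivity in a cubic graph with $|V(G)|\ge d\ge 6$ forbids triangles). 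If $H:=G-V(P)$ admits a perfect matching $M'$, then $M'\cup\{e_{j-1},e_{j+1}\}$ is a perfect matching of $G$ containing both edges of the pair.

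The crux of the argument, and the main obstacle, is to show that $H$ always has a perfect matching. When $|\delta_G(V(P))|=4$, the degree-and-parity estimate from the proof of Theorem~\ref{thm:6cuts} sharpens to rule out any Tutte obstruction, so $H$ has a perfect matching. When $|\delta_G(V(P))|=6$, Theorem~\ref{thm:6cuts} either supplies a perfect matching of $H$ directly or produces the ``bad'' structure: an independent set $S\subseteq V(H)$ of vertices trivalent in $H$ together with $|S|+2$ odd components of $H-S$, each satisfying $|\delta_G(L)|=3$. Cyclic $4$-edge-connectivity, applied to each $3$-edge-cut $\delta_G(V(L))$, forces $L$ or $G-V(L)$ to be acyclic; hence at most one component $L^*$ can contain a cycle, and every other component is a singleton because a tree with $|\delta_G(L)|=3$ in a cubic graph has $|V(L)|=1$.

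If such an $L^*$ exists, then $G-V(L^*)$ must be a forest, and a direct count using $|E(L^*)|=(3|V(L^*)|-3)/2$ gives $|V(G-V(L^*))|=2|S|+5$ and $|E(G-V(L^*))|=3|S|+6$, violating the forest inequality $|E|\le|V|-1$. If no $L^*$ exists, I trace the types (singleton or $S$-vertex) of the $d-4$ vertices of $V(C)\setminus V(P)$ along $C$: two adjacent singletons would lie in the same component of $H$, two adjacent $S$-vertices would violate independence, and the endpoints of the subpath $V(C)\setminus V(P)$ cannot lie in $S$ (since $S$ has no neighbour in $V(P)$). The forced alternating pattern with both endpoints singletons is incompatible with the even length $d-4$ of this subpath. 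In either case the bad structure is excluded, so $H$ admits the required perfect matching and we conclude $\pi(G)\le 3+\lceil d/4\rceil$.
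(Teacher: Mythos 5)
Your proposal is correct and follows essentially the same route as the paper: pair consecutive uncovered edges of $C$ via a path $P$ of length~$3$ with doubly covered middle edge, apply Theorem~\ref{thm:6cuts} to $G-V(P)$, use cyclic $4$-edge-connectivity to force the components of $H-S$ to be singletons, and derive a parity contradiction along the path $C\cap H$. Your extra care with the chord case $|\delta_G(V(P))|=4$ and with a possible non-singleton component $L^*$ only makes explicit details that the paper's shorter argument subsumes under ``$G$ has no non-trivial $3$-cuts.''
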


\begin{proof}
Let $\mathcal{M}$ be an optimal $3$-array of $G$ whose core is
a circuit $C$ of length $d$. By Proposition~\ref{prop:core},
$C$ alternates uncovered and doubly covered edges; in
particular, $d$ is even. Note that $d\ge 6$ by
Proposition~~\ref{prop:core}.

Let $P=efg\subseteq C$ be an arbitrary path of length $3$ whose
middle edge is doubly covered. We show that $G$ has a perfect
matching containing both uncovered edges of $P$. Set
$H=G-V(P)$. If $H$ has a perfect matching, say $M$, then
$M\cup\{e,g\}$ is a perfect matching of $G$ containing both $e$
and $g$, and we are done. Thus we may assume that $H$ has no
perfect matching. By Theorem~\ref{thm:6cuts}, there exists an
independent set $S$ of trivalent vertices of $H$ such
that $H-S$ satisfies Items (i)--(iii) of the theorem. In
particular, each component $L$ of $H-S$ is odd and has
$|\delta_G(L)|=3$. Since $G$ has no non-trivial $3$-cuts, each
component is just a vertex, implying that $H$ is a bipartite
graph. Since $P$ is connected to only one partition set of
$H$, it follows that $C\cap H$ is a path of even length. Hence,
$|E(C)|=|E(C\cap H)|+5$ is an odd number, which is impossible.
We have thus proved that there exists a perfect matching
containing any two uncovered edges of $C$ joined by a doubly
covered edge.

We are ready to finish the proof. It is easy to see that $C$
contains $\lfloor d/4\rfloor$ pairwise edge-disjoint paths of
length 3 that begin and end with an uncovered edge. At most one
uncovered edge remains outside these paths, which means that
the uncovered edges of $C$ can be covered by at most $\lceil
d/4\rceil$ paths of length 3 that begin and end with an
uncovered edge. In other words, there exists a set
$\mathcal{S}$ of $\lceil d/4 \rceil$ perfect matchings that
collectively contain all uncovered edges. Hence, $\pi(G)\le
|\mathcal{M}\cup\mathcal{S}|\le 3+\lceil d/4 \rceil$, as
claimed.
\end{proof}

\begin{theorem} Every cyclically $4$-edge-connected
cubic graph of defect $4$ admits a Berge cover.
\end{theorem}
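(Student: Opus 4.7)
\emph{Plan.} My strategy is to take an optimal $3$-array $\mathcal{M} = \{M_1, M_2, M_3\}$ with core $K$, combine Proposition~\ref{prop:core} with the cyclic $4$-edge-connectedness of~$G$ to pin down the possible shapes of~$K$, and then produce two further perfect matchings $M_4, M_5$ that jointly cover the four uncovered edges.

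First I would analyse the structure of~$K$. By Proposition~\ref{prop:core}, every component of $K$ is either an even circuit of length at least~$6$ or a subdivision of a cubic graph, and the doubly and triply covered edges together form a perfect matching of~$K$. A weight-count at each core vertex (using that each of the three colours appears exactly once at every vertex) shows that matched edges at $3$-valent core vertices are triply covered while matched edges at $2$-valent core vertices are doubly covered, and hence every matched edge of $K$ joins two core vertices of equal core-valence. Combining these constraints with the bound of four uncovered edges, the fact that any circuit component contributes at least three uncovered edges, and cyclic $4$-edge-connectedness, which rules out the problematic small cyclic cuts, I would conclude that $K$ is connected and takes one of exactly two shapes: (a)~a single even circuit of length~$8$, or (b)~a subdivision of the theta graph (two vertices joined by three parallel edges) with path-lengths $(1,3,3)$, in which the unique length-$1$ edge~$uv$ between the two $3$-valent vertices is triply covered.

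Case~(a) is handled immediately by Lemma~\ref{lem:cycliccore}, which yields $\pi(G) \le 3+\lceil 8/4\rceil = 5$ and hence a Berge cover. For case~(b), label the two length-$3$ paths of~$K$ as $u$-$a$-$b$-$v$ and $u$-$c$-$d$-$v$; the four uncovered edges are then $ua, bv, uc, dv$. I would split them into the vertex-disjoint pairs $\{ua, bv\}$ and $\{uc, dv\}$, whose vertex sets $V_1 = \{u,a,b,v\}$ and $V_2 = \{u,c,d,v\}$ each span exactly four edges of~$G$ (the two uncovered edges together with the triply covered edge~$uv$ and the matched middle edge of the corresponding length-$3$ path), so that $|\delta_G(V_i)| = 12 - 2\cdot 4 = 4$ and $|V(G)\setminus V_i|$ is even. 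The key intermediate step is a $4$-cut analogue of Theorem~\ref{thm:6cuts}: any subgraph $H$ of a bridgeless cubic graph with even order and $|\delta_G(H)| = 4$ has a perfect matching, since otherwise Tutte's theorem would provide $S \subseteq V(H)$ with $\odd(H-S) \ge |S|+2$, and the inequalities $|\delta_H(S)| \le 3|S| - a$ and $|\delta_H(S)| \ge 3\cdot\odd(H-S) - 4 + a$ (where $a$ is the number of edges of $\delta_G(S)$ that leave~$H$) collapse to the contradiction $6 \le 4 - 2a$. Applying this to $H_i = G - V_i$ and extending the resulting perfect matching by $\{ua,bv\}$ or $\{uc,dv\}$ yields $M_4$ and~$M_5$, so that $\{M_1, \ldots, M_5\}$ is a Berge cover.

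The main obstacle is the structural classification of~$K$. I must rule out the theta-subdivision path-length patterns $(1,1,5)$ (by exhibiting a $2$-edge cyclic cut separating the two parallel $uv$-edges from the rest of~$G$), $(1,2,4)$ and $(2,2,3)$ (by combining the matched-valence constraints with the matched/unmatched alternation along each subdivision path), subdivisions of~$K_4$ or of the two-vertex cubic multigraph with two loops (by locating internal $3$-edge cyclic cuts or by comparing uncovered-edge counts with cyclic-connectivity constraints), and multi-component cores (since every component contributes at least three uncovered edges). Once this classification is in place, the remaining steps are essentially routine applications of Lemma~\ref{lem:cycliccore} and of the Tutte-type argument used in the proof of Theorem~\ref{thm:6cuts}.
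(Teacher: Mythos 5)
Your proposal is correct and its skeleton matches the paper's: classify the optimal core using Proposition~\ref{prop:core} together with cyclic $4$-edge-connectivity, and dispatch the $8$-cycle case via Lemma~\ref{lem:cycliccore} (yielding $\pi(G)\le 3+\lceil 8/4\rceil=5$). The difference lies in the classification and in your case~(b). The paper asserts in one line that the only candidates are the $8$-cycle and two triangles joined by a triply covered edge, the latter being killed by cyclic $4$-edge-connectivity; you instead carry out the classification explicitly (your matched-valence observation correctly eliminates the $(1,2,4)$ and $(2,2,3)$ theta patterns, simplicity kills $(1,1,5)$, and triangle-freeness kills the dumbbell subdivision) but you retain the $(1,3,3)$ theta as a live case. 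That case is in fact vacuous: in it the edges $ua$, $ab$, $bv$, $vu$ form a $4$-cycle on which $M_1$ (and $M_2$) induces the perfect matching $\{ab,uv\}$, so replacing $M_1$ by $M_1\triangle\{ua,ab,bv,vu\}$ leaves only the two uncovered edges $uc$, $dv$, contradicting $\df{G}\ge 3$; this is the same phenomenon as Lemma~\ref{lem:3+4}. Still, your treatment of case~(b) is sound and even attractive: the $4$-cut analogue of Theorem~\ref{thm:6cuts} (any even-order subgraph with $|\delta_G(H)|=4$ in a bridgeless cubic graph has a perfect matching, since the Tutte inequalities collapse to $6\le 4-2a$) is correct and self-contained, and extending the matchings of $G-\{u,a,b,v\}$ and $G-\{u,c,d,v\}$ by the uncovered pairs does produce a Berge cover. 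So your argument buys robustness at the price of length, and incidentally makes explicit a classification step that the paper passes over rather quickly.
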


\begin{proof}
Let $G$ be a cyclically $4$-edge-connected cubic graph of
defect $4$, let $\mathcal{M}=\{M_1,M_2,M_3\}$ be an optimal
$3$-array for $G$, and let $C$ be the core of $\mathcal{M}$. We
claim that $C$ does not contain a triply covered edge. Suppose
to the contrary that $e=uv$ is a triply covered edge in $C$.
Then there are four uncovered edges $uu_1$, $uu_2$, $vv_1$, and
$vv_2$ adjacent to $e$. Since $G$ is cyclically
$4$-edge-connected, the vertices $u$, $u_1$, $u_2$, $v$, $v_1$,
and $v_2$ are pairwise distinct. There are no other uncovered
edges in $G$ than the mentioned four, which leaves us, up to
isomorphism, with only two possibilities for~$C$:
\begin{itemize}
 \item[(1)] The core has doubly covered edges $u_1u_2$ and $v_1v_2$.
 \item[(2)] The core has doubly covered edges $u_1v_1$ and $u_2v_2$.
\end{itemize}

In the former case, $G$ contains the triangles $uu_1u_2$ and
$vv_1v_2$, which violates cyclic $4$-edge-connectivity. In the
latter case, we may clearly assume that the doubly covered edge
$u_1v_1$ belongs to $M_1\cap M_2$. Set
$M_2'=(M_2-\{uv,u_1v_1\})\cup\{uu_1,vv_1\}$. Then $M_2'$ is a
perfect matching of $G$ such that the $3$-array
$\mathcal{M}'=\{M_1,M_2',M_3\}$ leaves only two uncovered
edges, namely $uu_2$ and $vv_2$. Thus $\mathcal{M}$ is not
optimal, contrary to the assumption. All this shows that $C$
does not contain a triply covered edge. By
Proposition~\ref{prop:core}, each component of $C$ is an even
circuit of length at least $6$, which implies that $C$ is an
$8$-cycle. The result now follows from
Lemma~\ref{lem:cycliccore}.
\end{proof}

\section{Proof of Theorem~\ref{thm:2}}\label{sec:thm2}
\label{sec:c4c}

\noindent{}We prove that every cyclically $4$-edge-connected
cubic graph $G$ of defect $3$ has $\pi(G)=4$ with the only
exception of the Petersen graph.

\begin{proof}[Proof of Theorem~\ref{thm:2}]
Let $G$ be a cyclically $4$-edge-connected cubic graph with
$\df{G}=3$ and $\pi(G)>4$. Our aim is to show that $G$ is
isomorphic to the Petersen graph.

Let $C=(e_0e_1\ldots e_5)$ be the hexagonal core of an optimal
$3$-array $\mathcal{M}=\{M_1,M_2,\penalty0 M_3\}$ of $G$, and
let $H=G-V(C)$. We may assume that the colouring $\phi$
associated with $\mathcal{M}$ and the names of the vertices and
edges in the vicinity of $C$ are as in Figure~\ref{fig:core3}.
In particular, the endvertices of the edges of the cut
$\delta_G(C)=\{f_0,f_1,\ldots,f_5\}$ not lying on $C$ are
$u_0$, $u_1$, \ldots, $u_5$ and are listed in a cyclic order
around $C$. Set $U=\{u_0,\ldots,u_5\}$. Note that some of the
vertices $u_i$ and $u_j$, where $i\neq j$, may coincide.

\medskip

\noindent Claim 1. \emph{The subgraph $H$ is bipartite.}

\medskip\noindent
\emph{Proof of Claim 1.} If $H$ contains a perfect matching,
then the matching can be extended to a perfect matching $M_4$
of $G$ which covers the three uncovered edges of $C$. It
follows that $\pi(G)=4$, contrary to our assumption. Therefore
$H$ has no perfect matching. In this situation
Theorem~\ref{thm:6cuts} tells us that there exists a set
$S$ of trivalent vertices of $H$ such that each component
$L$ of $H-S$ has $|\delta_G(L)|=3$ and each edge of
$\delta_G(H)$ joins a vertex of $H-S$ to a vertex of $C$. As
$G$ is cyclically 4-edge-connected, each component of $H-S$ is
a single vertex. Consequently, $H$ is a bipartite graph with
bipartition $\{S, V(H)-S\}$ and edge set $\delta_G(S)$.  This
establishes Claim~1.

\medskip
Next, we explore the colouring properties of the subgraph
$H^+=G-E(C)\subseteq G$. Note that
$H^+=H+\{f_0,f_1,\ldots,f_5\}$ and $H^+\cup C=G$. Set
$C^+=C+\{f_0,f_1,\ldots,f_5\}$. In $C^+$, the endvertices
$u_0$, $u_1$, \dots, $u_5$ are assumed as pairwise distinct.

Before proceeding further we need several definitions. A
\emph{colour vector} is any sequence $\alpha=c_0c_1\ldots c_5$
of six colours $c_i\in\{1,2,3\}$ such that $c_0+\ldots+c_5=0$
in $\mathbb{Z}_2\times\mathbb{Z}_2$. It follows from
Parity Lemma that every $3$-edge-colouring $\sigma$ of $H^+$
thus \emph{induces} the colour vector
$\alpha_{\sigma}=\sigma(f_0)\sigma(f_1)\ldots\sigma(f_5)$. By
permuting the colours $1$, $2$, and $3$ in $\alpha$ we obtain a
new colour vector, nevertheless, the difference between the two
is insubstantial. Therefore each of the six permutations of
colours produces a sequence that encodes essentially the same
distribution of colours. In order to have a canonical
representative, we choose from them the lexicographically
minimal sequence and call it the \emph{type} of $\alpha$. A
\emph{colouring type} is the type of some colour
vector~$\alpha$. The \emph{type} of a $3$-edge-colouring
$\sigma$ of $H^+$ is the type of the induced colour vector
$\sigma(f_0)\sigma(f_1)\ldots\sigma(f_5)$. A colouring type is
said to be  \emph{admissible} for $H^+$ if it is the type of a
certain $3$-edge-colouring of $H^+$. Similar definitions apply
to $C^+$ and, in fact, to all subcubic graphs with $6$
vertices of degree $1$.

\medskip\noindent
Claim 2. \emph{The colour vector induced by any
$3$-edge-colouring of $H^+$ involves all three~colours.}

\medskip\noindent
\emph{Proof of Claim 2.} Consider the colour vector induced by
a $3$-edge-colouring of $H^+$. If there was a colour that does
not occur in it, then the corresponding colour class could be
extended to a perfect matching of the entire $G$ that covers
the three uncovered edges of $C$. Consequently, $G$ could be
covered with four perfect matchings, contrary to our
assumption. Claim~2 is proved.
\medskip

\begin{figure}[htbp]
            \centering
		\includegraphics[width=0.85\textwidth]{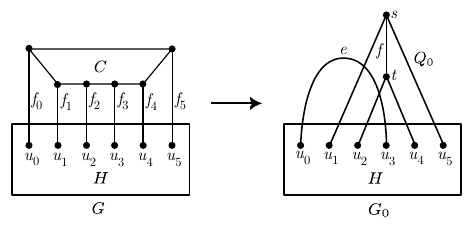}
\caption{Constructing $G_0$ from $G$.}
            \label{obr:HPQ}\end{figure}
            
\medskip\noindent
Claim 3. \emph{In the set $U=\{u_0,\ldots,u_5\}$ we have
$u_i=u_{i+3}$ or each $i\in\{0,1,2\}$.}

\medskip\noindent
\emph{Proof of Claim 3.} Let $Q$ denote the acyclic subcubic
graph on eight vertices which consists of two components:
first, the complete graph $K_2$ on two vertices joined by an
edge $e$ and second, a tree of order $6$ consisting of an edge
$f$ joining two trivalent vertices $s$ and $t$, with four
neighbouring leaves. For each $i\in\{0,1,2\}$ let us construct
an auxiliary graph $G_i$ by replacing $C_6^+$ in $G$ with a
copy $Q_i$ of $Q$ as indicated in Figure~\ref{obr:HPQ} for
$i=0$. In general, for every $i\in\{0,1,2\}$ the edges of $Q_i$
are $e=u_iu_{i+3}$, $f=st$, $su_{i+1}$, $su_{i+5}$, $tu_{i+2}$
and $tu_{i+4}$ with indices reduced modulo~$6$. Observe that
$u_i=u_{i+3}$ if and only if $G_i$ has a loop.

\begin{table}[htb]
\centering
\begin{tabular}{ccccccc}
\toprule
$Q_0$ & & $Q_1$ & & $Q_2$ & & $C_6^+$\\
\midrule
& & & & $112332$ & & $112332$\\
$122133$ & & & & & & $122133$\\
$123123$ & & $123123$ & & $123123$ & & $123123$\\
 & & $123321$ & &  & & $123321$\\
\bottomrule
\end{tabular}
\bigskip		
\caption{Colouring types for the copies of $Q$ and for $C_6^+$
involving all three colours.} \label{tbl:types}
\end{table}

Suppose to the contrary that $u_j\ne u_{j+3}$ for some
$j\in\{0,1,2\}$, that is, $G_j$ is loopless. We prove that
$G_j$ is not $3$-edge-colourable. Since $G$ is a snark, the
sets of colouring types admissible for $H^+$ and those
admissible for $C_6^+$ have no common member. Thus, by Claim~2,
for the proof that $G_j$ is not $3$-edge-colourable it is
sufficient to show that every colouring type admissible for
$Q_j$ and involving all three colours is contained in the set
of colouring types admissible for $C_6^+$. Direct verification
reveals that $Q_j$ has two such types and $C_6^+$ four and that
the required inclusion holds; see Table~\ref{tbl:types}. Hence
$G_j$ is not $3$-edge-colourable.

On the other hand, $H$ is bipartite with bipartition
$\{S,V(H)-S\}$, so $G_j-\{e,f\}$ is bipartite with bipartition
$\{S\cup\{s,t\},V(H)-S\}$. Since $G_j$ is not
$3$-edge-colourable, from Theorem~\ref{thm:abg_class1} we
deduce that $G_j$ is not almost bipartite. Thus $G_j$ has a
bridge, say~$b$. A simple counting argument shows that in $G_j$
the bridge $b$ separates $e$ from~$f$. Let $K$ be the component
of $G_j-b$ containing $e$. Now, $K-e$ is a subgraph of $G$
separated from the rest of $G$ by the $3$-edge-cut
$R_j=\{b,f_j,f_{j+3}\}=\delta_G(K-e)$. Since $K$ has at least
two vertices, and so does the other component of $G-R_j$, we
conclude that $R_j$ is a cycle-separating $3$-edge-cut in $G$.
This contradicts the assumption that $G$ is cyclically
$4$-edge-connected and establishes the fact that $u_i=u_{i+3}$
for each $i\in\{0,1,2\}$. Claim~3 is proved.

\medskip

We now finish the proof. Consider the subgraph $W$ of $G$
induced by the vertices of $C$ and $U$. Since $u_i=u_{i+3}$ for
each $i\in\{0,1,2\}$, the cut $\delta_G(W)$ comprises only
three edges. However, $G$ is cyclically $4$-edge-connected,
implying that the other component of $G-\delta_G(W)$ is just a
single vertex. Therefore
$$ |G|=|C|+|U|+1=6+3+1=10.$$
Since $G$ is a snark, it must be isomorphic to the Petersen graph.
Summing up, we have proved that every cyclically
$4$-edge-connected cubic graph $G$ with $\df{G}=3$ and
$\pi(G)\ge 5$ is the Petersen graph. The proof is complete.
\end{proof}

\begin{remark}
{\rm The proof of Theorem~\ref{thm:2} shows that in a
cyclically $4$-edge-connected cubic graph of defect $3$
different from the Petersen graph every optimal $3$-array of
perfect matchings extends to a covering with four perfect
matchings. The general problem of whether an optimal $3$-array
of perfect matchings extends to a Berge cover seems to be of
great interest.}
\end{remark}

\section{Family of snarks with defect 3 and perfect matching index 5}

\noindent{}The requirement of cyclic connectivity at least $4$
in Theorem~\ref{thm:2} is essential, because there exist
infinitely many $3$-edge-connected snarks with defect $3$ and
perfect matching index~$5$. They can be constructed as follows:

Take the Petersen graph $Pg$ and remove a vertex $v$ from it
together with the three incident edges, leaving a graph $P$
containing three $2$-valent vertices. Further, take an
arbitrary connected bipartite cubic graph on at least four
vertices, and similarly remove a vertex $u$, leaving a graph
$B$ with tree $2$-valent vertices. Create a cubic graph $H$ by
joining every $2$-valent vertex of $P$ to a $2$-valent vertex
of~$B$. Clearly, the set $R$ consisting of the newly added
edges is a cycle-separating $3$-edge-cut in $H$. We claim that
the graph $H$ has defect 3 and perfect matching index~$5$.

We first observe that $\df{H}=3$. Parity Lemma implies that $P$
is uncolourable, so $H$ is a snark, and therefore $\df{H}\ge3$.
Since $Pg$ is vertex-transitive, one can choose an optimal
array for $Pg$ in such a way that its hexagonal core
avoids the vertex~$v$. 
As every bipartite graph is $3$-edge-colourable, one can easily
extend the $3$-array of $Pg$ to a $3$-array of $H$ which
retains the original core of $Pg$. Hence,  $\df{H}=3$.

Now we want to prove that $\pi(H)=5$. Suppose to the contrary
that $\pi(H)\le4$. Then $H$ has a covering
$\mathcal{C}=\{M_1,M_2,M_3,M_4\}$ with four perfect matchings.
Since both $P$ and $B$ contain an odd number of vertices, we
conclude that each $M_i$ has an odd number of common edges with
$R$. Moreover, since each edge of $H$ is in at most two of the
four perfect matchings, the weights of edges in the edge cut
$R$ are either $1,1,2$, or $2,2,2$. The former possibility is
excluded, because contracting $B$ to a single vertex would
produce a covering of $Pg$ with four perfect matchings,
contradicting the fact that $\pi(Pg)=5$. Therefore the weight
of each edge in $R$ equals $2$. Now, each vertex of $B$ is
incident with precisely one edge of weight $2$, including those
in $R$. It follows that the simply covered edges form a
$2$-factor of $B$, say $F$. However, $B$ has an odd number of
vertices, so at least one circuit of $F$ is odd, which is
impossible because $B$ is bipartite. Hence, $\pi(H)\ge 5$, and
from Theorem~\ref{thm:1} we infer that $\pi(H)=5$.

\section*{Acknowledgements}
This research was supported by the grant APVV-23-0076 of
Slovak Research and Development Agency. The second and the
fourth author were supported by the grant VEGA 1/0727/22.

\smallskip

The authors are grateful to Tom\'a\v s Kaiser for suggesting a
shorter proof of Proposition~\ref{prop:almost_bip} and to
anonymous referees for their constructive comments.


\end{document}